\def\<{\langle}
\def\>{\rangle}
\newcommand{\ba}{\[\begin{aligned}}
\newcommand{\ea}{\end{aligned}\]}
\newcommand\mnote[1]{} 
\newcommand{\beq}[1]{\begin{equation}\label{#1}}
\newcommand\eeq{\end{equation}}
\newcommand\ben{\begin{equation}}
\newcommand\een{\end{equation}}
\newcommand\bes{\begin{eqnarray*}}
\newcommand\ees{\end{eqnarray*}}
\newcommand\besn{\begin{eqnarray}}
\newcommand\eesn{\end{eqnarray}}
\def\bthm{\begin{theorem}}
\def\ethm{\end{theorem}}
\def\bdefn{\begin{definition}}
\def\edefn{\end{definition}}
\newcommand{\benu}{\begin{enumerate}\setlength\itemsep{6pt}}
\newcommand{\beit}{\begin{itemize}\setlength\itemsep{3pt}}
\def\eenu{\end{enumerate}}
\def\eeit{\end{itemize}}
\def\beds{\begin{description}}
\def\eeds{\end{description}}
\def\bepr{\begin{problem}}
\def\eepr{\end{problem}}
\def\bprf{\begin{proof}}
\def\eprf{\end{proof}}
\def\berk{\begin{remark}}
\def\eerk{\end{remark}}
\def\bex{\begin{exercise}}
\def\eex{\end{exercise}}
\def\beg{\begin{example}}
\def\eeg{\end{example}}
\def\PP{{\mathcal P}}
\newcommand{\sm}{{\raise0.3ex\hbox{$\scriptstyle \setminus$}}}
\def\alp{\alpha}
\renewcommand\phi{\varphi}
\theoremstyle{plain} 
    \newtheorem{theorem}{Theorem}
    \newtheorem{lemma}[theorem]{Lemma}
    \newtheorem{proposition}[theorem]{Proposition}
\theoremstyle{definition} 
    \newtheorem{definition}[theorem]{Definition}
    \newtheorem{exercise}[theorem]{Exercise}
    \newtheorem{problem}[theorem]{Problem}
        \newtheorem{remark}[theorem]{Remark}
    \newtheorem{example}[theorem]{Example}
\renewcommand{\bex}{\indent\begin{exercise}}
\begin{document}

\title{On Hadamard powers of Random Wishart matrices}
\author[J.~S.~Baslingker]{Jnaneshwar Baslingker}
\address{Department of Mathematics, Indian Institute of Science, Bangalore-560012, India.}
\email{jnaneshwarb@iisc.ac.in}
\date{}
\begin{abstract}
A famous result of Horn and Fitzgerald is that the $\beta$-th Hadamard power of any $n\times n$ positive semi-definite (p.s.d) matrix with non-negative entries is p.s.d $\forall \beta\geq n-2$ and is not necessarliy p.s.d for $\beta< n-2,$ with $\ \beta\notin \mathbb{N}$. In this article, we study this question for random Wishart matrix $A_n:={X_nX_n^T}$, where $X_n$ is $n\times n$ matrix with i.i.d. Gaussians. It is shown that applying $x\rightarrow |x|^{\alpha}$ entrywise to $A_n$, the resulting matrix is p.s.d, with high probability, for $\alpha>1$ and is not p.s.d, with high probability, for $\alpha<1$. It is also shown that if $X_n$ are $\lfloor n^{s}\rfloor\times n$ matrices, for any $s<1$, the transition of positivity occurs at the exponent $\alp=s$.
\end{abstract}
\keywords{ Wishart matrices, Positive semi-definite, Hadamard power}
\subjclass[2010]{60B20, 60B11 }
\maketitle

\section{Introduction}

Entrywise exponents of matrices preserving positive semi-definiteness has been a topic of active research.  An important theorem in this field is the result of Horn and Fitzgerald \cite{FH77}. Let $\PP_n^+$ denote the set of $n\times n$ p.s.d. matrices with non-negative entries. Schur product theorem gives us that the $m$-th Hadamard power $A^{\circ m}:=[a_{ij}^m]$ of any p.s.d. matrix $A=[a_{ij}]\in \PP_n^+$  is again p.s.d. for every positive integer $m$. Horn and Fitzgerald proved that $n-2$ is the `critical exponent' for such matrices, i.e., $n-2$ is the least number for which $A^{\circ \alpha}\in \PP_n^+$ for every $A\in \PP_n^+$ and for every real number $\alpha \ge n-2$. They considered the matrix $A\in \PP_n^+$ with $(i,j)$-th entry $1+\varepsilon ij$ and showed that if $\alpha$ is not an integer and $0<\alpha<n-2$, then $A^{\circ \alpha}$ is not positive semi-definite for a sufficiently small positive number $\varepsilon$ (see \cite{AK}).

We consider a random matrix version of this problem. Let $X:=\left[X_{ij}\right]$ be a $n \times n$ matrix, where $X_{ij}$ are i.i.d standard normal random variables. 
Define  $A_{n}:=\frac{XX^T}{n}$ and $|A_{n}|^{\circ\alpha}$ as the matrix obtained by applying $x\rightarrow |x|^{\alpha}$ function entrywise to $A_{n}$. Let $B_{n,\alpha}:=|A_{n}|^{\circ\alpha}$. 
%

We are interested in the values of real $\alpha>0$ for which the matrix $B_{n,\alpha}$ is positive semi-definite, with high probability. Simulations show that  for large values of $n$, if $\alpha>1$ then with high probability, $B_{n,\alpha}$ is positive semi-definite and for $\alpha<1$, with high probability, $B_{n,\alpha}$ is not positive semi-definite (as shown in Table \ref{Table 1}).

We state and prove the theorem that these observations from simulations are indeed true. In fact we prove a stronger result. Fix any $s\leq 1$ and let $m= \lfloor n^{s}\rfloor$. Let $X_n:=\left[X_{ij}\right]$ be a $m \times n$ matrix, where $X_{ij}$ are i.i.d standard normal random variables. Define  $A_{n,s}:=\frac{X_nX_n^T}{n}$ and $B_{n,\alpha,s}:=|A_{n,s}|^{\circ\alpha}$. Let $\lambda_1(A)$ denote the smallest eigenvalue of $A$. We prove the following main result.
\begin{theorem}\label{main theorem}
 $\exists \varepsilon_s>0$ such that for $\alpha>s$, as $n\rightarrow \infty$
\begin{align*}
\mathbb{P}\left(\lambda_{1}(B_{n,\alpha,s})\geq \varepsilon_s\right)&\rightarrow 1.   
\end{align*}

For $\alpha<s$, as $n\rightarrow \infty$
\begin{align*}
\mathbb{P}\left(\lambda_{1}(B_{n,\alpha,s})<0\right)&\rightarrow 1.   
\end{align*}
\end{theorem}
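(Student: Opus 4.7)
The plan is to decompose $B_{n,\alpha,s}$ into a diagonal part, a rank-one mean piece, and a centered fluctuation, and to derive both halves of the theorem from a Wigner-type spectral bound on the fluctuation. Let $R_i:=\|X_i\|^2/n$, $c_\alpha:=\mathbb{E}|Z|^\alpha$ with $Z\sim N(0,1)$, and $\mu_n:=\mathbb{E}|A_{n,s}(i,j)|^\alpha$ for $i\neq j$. Conditioning on $X_i$ and using $A_{n,s}(i,j)\mid X_i\sim N(0,\|X_i\|^2/n^2)$ together with $\mathbb{E}\|X_i\|^\alpha=n^{\alpha/2}(1+o(1))$ gives $\mu_n=c_\alpha n^{-\alpha/2}(1+o(1))$ and $\mathrm{Var}(|A_{n,s}(i,j)|^\alpha)=v_\alpha n^{-\alpha}(1+o(1))$ for some $v_\alpha>0$. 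Set $D:=\mathrm{diag}(R_i^\alpha)$, $J:=\mathbf{1}\mathbf{1}^T$, and $N_{ij}:=|A_{n,s}(i,j)|^\alpha-\mu_n$ for $i\neq j$, $N_{ii}:=0$. Then
\[ B_{n,\alpha,s}=D+\mu_n(J-I)+N. \]
Chi-square concentration combined with a union bound over $i\le m\le n$ gives $\|D-I\|_{\mathrm{op}}\to 0$ in probability, and $\mu_n(J-I)$ has eigenvalue $-\mu_n$ on $\mathbf{1}^\perp$ and $\mu_n(m-1)\ge 0$ along $\mathbf{1}$.

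The technical heart of the proof is the Wigner-type bound
\[ \|N\|_{\mathrm{op}}\le K_{\alpha,s}\,n^{(s-\alpha)/2}\qquad\text{with probability tending to one,} \]
which I would establish by the method of moments. Since the rows of $X_n$ are independent, the entries $N_{ij}$ and $N_{kl}$ are independent whenever $\{i,j\}\cap\{k,l\}=\emptyset$. Expanding $\mathbb{E}[\mathrm{Tr}\,N^{2k}]$ as a sum over closed walks of length $2k$ on $\{1,\dots,m\}$, the centering of the $N_{ij}$ kills every walk containing an edge whose row indices do not reappear, leaving only the ``fully matched'' walks of Wigner type. Bounding each surviving walk's contribution by $C_k n^{-k\alpha}$ (via successive conditioning on rows $X_i$ and Gaussian moment estimates), the usual combinatorial count of matched walks yields $\mathbb{E}[\mathrm{Tr}\,N^{2k}]\le C'_k m^{k+1}n^{-k\alpha}$. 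Then $\|N\|_{\mathrm{op}}^{2k}\le\mathrm{Tr}\,N^{2k}$, Markov's inequality, and $k=k(n)\to\infty$ slowly deliver the stated bound. I expect this combinatorial estimate, and in particular the bookkeeping for the dependencies among entries sharing a row, to be the main technical obstacle.

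For $\alpha>s$ the positivity assertion is immediate: Weyl's inequality gives
\[ \lambda_1(B_{n,\alpha,s})\ge \lambda_1(D)+\lambda_1(\mu_n(J-I))-\|N\|_{\mathrm{op}}\ge 1-o(1)-\mu_n-K_{\alpha,s}\,n^{(s-\alpha)/2}, \]
and each of the three correction terms vanishes because $\alpha>s$ forces $n^{(s-\alpha)/2}\to 0$; hence $\lambda_1(B_{n,\alpha,s})\ge 1/2$ with probability tending to one.

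For $\alpha<s$, I would show concentration of $\mathrm{Tr}\,N^2$ around its mean $\asymp n^{2s-\alpha}$ (by the same moment expansion at $k=1$), so that $\mathrm{Tr}\,N^2\gtrsim n^{2s-\alpha}$ w.h.p. Combined with $\mathrm{Tr}\,N=0$ and $\|N\|_{\mathrm{op}}\le K_{\alpha,s}\,n^{(s-\alpha)/2}$, the inequality $\mathrm{Tr}\,N^2\le 2\|N\|_{\mathrm{op}}\sum_{\lambda_i(N)<0}|\lambda_i(N)|$ forces the sum of absolute values of negative eigenvalues of $N$ to be $\gtrsim m\,n^{(s-\alpha)/2}$; a pigeonhole argument using the uniform eigenvalue bound then produces at least two eigenvalues of $N$ with $\lambda_i(N)\le -c\,n^{(s-\alpha)/2}$ for some $c>0$. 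Cauchy interlacing gives $\min_{w\perp\mathbf{1},\,\|w\|=1}w^TNw\le \lambda_2(N)\le -c\,n^{(s-\alpha)/2}$, and for a minimising $w$, using $(J-I)w=-w$,
\[ w^TB_{n,\alpha,s}w=w^TDw-\mu_n+w^TNw\le 1+o(1)-\mu_n-c\,n^{(s-\alpha)/2}\longrightarrow-\infty, \]
so $\lambda_1(B_{n,\alpha,s})<0$ with probability tending to one.
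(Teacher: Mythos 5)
Your decomposition $B_{n,\alpha,s}=D+\mu_n(J-I)+N$ matches the one the paper uses (compare with $D_m$, the $\ell_\alpha J_m/n^{\alpha/2}$ piece, and the centered off-diagonal matrix $C_m$ in Section~\ref{sec a>1}). For $\alpha>s$ your strategy of controlling $\mathbb{E}[\mathrm{Tr}\,N^{2k}]$ and invoking Markov is the same as the paper's. For $\alpha<s$ you take a genuinely different route: instead of showing that the EESD of the centered matrix has positive mass on the negative reals and concentrating the ESD via the Guntuboyina--Leeb result, you want a sharp two-sided operator-norm bound plus a pigeonhole to manufacture a macroscopically negative eigenvalue and then undo the rank-one and diagonal pieces. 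That route would work if the operator-norm bound were in hand, and it would avoid the spectral-measure concentration tool, so the difference is real.

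The genuine gap is in the moment bound itself. You assert that centering ``kills every walk containing an edge whose row indices do not reappear, leaving only the fully matched walks of Wigner type'', and you then bound each surviving walk's contribution by $C_k n^{-k\alpha}$. The first clause is correct but the conclusion does not follow: $N_{ij}$ and $N_{kl}$ are independent only when $\{i,j\}\cap\{k,l\}=\emptyset$, so walks in which \emph{every edge is distinct} but every vertex has degree at least $2$ survive the centering. The $2k$-cycle visiting $2k$ distinct vertices is the simplest example; every row index appears in exactly two edges, no edge repeats, yet $\mathbb{E}[N_{i_1 i_2}\cdots N_{i_{2k}i_1}]\neq 0$. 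Such walks are not ``fully matched'' in the Wigner sense, and summing the naive per-walk bound $C_k n^{-k\alpha}$ over the $\sim m^{2k}$ of them gives $m^{2k}n^{-k\alpha}$, which overwhelms your claimed $m^{k+1}n^{-k\alpha}$ and is not $o(1)$ for $\alpha$ just above $s$. What actually saves the estimate is precisely the paper's Lemma~\ref{main lemma_1}: conditioning on the rows of high-degree vertices, the conditional expectation
\[
Y_{ik}=\mathbb{E}\Bigl[\bigl(|\langle R_i,R_j\rangle/\sqrt{n}|^{\alpha}-\ell_{\alpha}\bigr)\bigl(|\langle R_j,R_k\rangle/\sqrt{n}|^{\alpha}-\ell_{\alpha}\bigr)\,\big|\,\mathcal{F}_{i,k}\Bigr]
\]
is of order $1/n$, not $O(1)$, and each non-leaf degree-$2$ vertex in the walk contributes such a factor (with Lemma~\ref{main lemma_2} handling leaf vertices). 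It is these extra powers of $1/n$, absent from your per-walk bound, that beat the $m^v$ counting when $v>k+1$. Until that conditional-moment estimate is proved, the Wigner-type norm bound is not established, and both halves of your argument hang on it; for the $\alpha<s$ half you need it with controlled constants as $k\to\infty$, which is an additional burden the paper's EESD-plus-concentration route deliberately avoids (it only uses moments up to the fourth, via Lemma~\ref{Lemma 1 of alpha<1}).
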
 

\begin{remark}
Simulations show Theorem \ref{main theorem} holds if i.i.d Gaussians are replaced by other i.i.d random variables with finite second moment like Uniform$(0,1)$, Exp($1$) and even heavy tailed distributions like Cauchy distribution, distributions with densities $f(x)=bx^{-1-b},\forall x\geq 1$, all with transition of positivity at exponent $\alp=s$. Note that in the last case one does not have finite mean if $b$ is small. This suggests that the transition of matrix positivity happens for a large family of distributions. In this direction we prove the below proposition where we show that $B_{n,\alp,s}$ is p.s.d for the range of $\alpha>2s$,  when $X_n$ has sub-Gaussian entries. 
\end{remark}

\begin{proposition}\label{alpha>2}
Let the entries of $X_n$ be i.i.d sub-Gaussian random variables with mean $0$ and unit variance. Fix $\alpha>2s$ and $\varepsilon>0$. Define $B_{n,\alp,s}$ as before. Then as $n\rightarrow \infty$
\begin{align}
\mathbb{P}\left(\lambda_{1}(B_{n,\alp,s})\leq 1-\varepsilon\right)&\rightarrow 0,   \\
\mathbb{P}\left(\lambda_{m}(B_{n,\alp,s})\geq 1+\varepsilon\right)&\rightarrow 0.   
\end{align}
\end{proposition}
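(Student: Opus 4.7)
The plan is to show that $B_{n,\alpha,s}$ is close to the identity in operator norm, with high probability. Write
\[
B_{n,\alpha,s} = D_n + E_n,
\]
where $D_n$ is the diagonal matrix with entries $(D_n)_{ii} = (A_{n,s})_{ii}^{\alpha}$ and $E_n$ collects the off-diagonal part $(E_n)_{ij} = |(A_{n,s})_{ij}|^{\alpha}$ for $i\neq j$. By Weyl's inequality,
\[
\lambda_1(D_n) - \|E_n\|_{op} \;\leq\; \lambda_1(B_{n,\alpha,s}) \;\leq\; \lambda_m(B_{n,\alpha,s}) \;\leq\; \lambda_m(D_n) + \|E_n\|_{op},
\]
so it is enough to prove that $\max_i |(D_n)_{ii}-1| \to 0$ and $\|E_n\|_{op}\to 0$ in probability.

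For the diagonal, $(A_{n,s})_{ii} = n^{-1}\sum_{k=1}^n X_{ik}^2$ is an average of $n$ i.i.d.\ centred sub-exponential random variables $X_{ik}^2 - 1$, so Bernstein's inequality yields $\mathbb{P}(|(A_{n,s})_{ii}-1|>t) \leq 2\exp(-c n\min(t,t^2))$. Taking a union bound over the $m = \lfloor n^s\rfloor \leq n$ rows and choosing $t = n^{-1/4}$ (say) gives $\max_i |(A_{n,s})_{ii}-1|\to 0$ in probability, and then continuity of $x\mapsto x^\alpha$ near $x=1$ transfers this bound to $(D_n)_{ii}$.

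For the off-diagonal part, I would estimate $\|E_n\|_{op}^2 \leq \|E_n\|_F^2 = \sum_{i\neq j} |(A_{n,s})_{ij}|^{2\alpha}$. Each off-diagonal entry is $(A_{n,s})_{ij} = n^{-1}\sum_{k=1}^n X_{ik}X_{jk}$; conditionally on the $j$-th row, this is a sum of $n$ independent centred sub-Gaussian random variables with total sub-Gaussian parameter $\sigma_j^2 = K^2 n^{-2}\sum_k X_{jk}^2$. On the good event $\mathcal G = \{\|X_{j\bullet}\|^2\leq 2n \text{ for every } j\}$, which has overwhelming probability by the diagonal estimate, standard sub-Gaussian moment bounds give $\mathbb{E}[\,|(A_{n,s})_{ij}|^{2\alpha}\,\mathbf{1}_{\mathcal G}\,] \leq C_\alpha n^{-\alpha}$. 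Hence
\[
\mathbb{E}\bigl[\|E_n\|_F^2 \mathbf{1}_{\mathcal G}\bigr] \;\leq\; m(m-1)\,C_\alpha\, n^{-\alpha} \;\leq\; C_\alpha\, n^{2s-\alpha},
\]
which tends to $0$ precisely when $\alpha > 2s$. Markov's inequality then gives $\|E_n\|_{op}\to 0$ in probability, and combining the two estimates finishes both claims.

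The main subtlety will be the off-diagonal moment bound: products of sub-Gaussians are only sub-exponential, so one cannot directly apply an unconditional sub-Gaussian estimate to $\sum_k X_{ik}X_{jk}$. Conditioning on one row and truncating onto $\mathcal G$ is the clean way around this, but one must handle a real (not integer) exponent $2\alpha$, which is done via the standard identity $\mathbb{E}|Z|^{2\alpha} = 2\alpha\int_0^\infty t^{2\alpha-1}\mathbb{P}(|Z|>t)\,dt$ applied to the conditional sub-Gaussian tail. This is the only step that genuinely uses the sub-Gaussian hypothesis; the remainder of the argument would go through under much weaker moment assumptions.
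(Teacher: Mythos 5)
Your argument is correct and reaches the same conclusion on the same range $\alpha>2s$, but by a genuinely different route. The paper decomposes $B_n$ into diagonal plus off-diagonal and applies the Gershgorin circle theorem: it bounds each diagonal entry near $1$ and each off-diagonal \emph{row sum} $\sum_{j\neq i}(B_n)_{ij}$ below $\varepsilon$ with exponentially small failure probability, using Bernstein's inequality for sub-exponential sums (products of sub-Gaussians) and a union bound over $O(m^2)$ events; the exponent $n^{1-2s/\alpha}$ in the tail bound is what forces $\alpha>2s$. You instead use Weyl's inequality together with a second-moment (Frobenius norm) bound $\|E_n\|_{\mathrm{op}}^2\le\|E_n\|_F^2=\sum_{i\neq j}|(A_{n,s})_{ij}|^{2\alpha}$, computing $\mathbb{E}\bigl[|(A_{n,s})_{ij}|^{2\alpha}\mathbf{1}_{\mathcal G}\bigr]\lesssim n^{-\alpha}$ by conditioning on one row and using the conditional sub-Gaussianity, so that $\mathbb{E}[\|E_n\|_F^2\mathbf{1}_{\mathcal G}]\lesssim n^{2s-\alpha}\to 0$ exactly when $\alpha>2s$. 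The trade-offs: the paper's Gershgorin route yields exponentially strong probability bounds (which it actually reuses in the proof of Theorem~\ref{main theorem}, e.g.\ \eqref{eq:Gersh_3}--\eqref{eq:Gersh_5}), whereas your Markov/Frobenius route gives only polynomial rates; on the other hand, your conditional-moment calculation is closer in spirit to the trace-moment machinery that the paper develops in Section~\ref{sec a>1}, and arguably isolates the role of the exponent $2\alpha$ more transparently. One small caution on your closing remark: the diagonal estimate, as you present it via Bernstein for $X_{ik}^2-1$, also leans on the sub-Gaussian hypothesis (sub-exponentiality of $X_{ik}^2$); it could be weakened to a finite-moment assumption only at the cost of a slower union bound, and not uniformly in $s$ up to $s=1$.
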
 

\begin{table}[h!]
\centering
\begin{tabular}{ |c|c|c|c| } 

\hline
$s$ & $\alpha$ & $\lambda_1$  \\
\hline
$1$ & $0.98$ & $-0.288$ \\ 
$1$ & $0.99$ & $-0.246$ \\ 
$1$ & $1.06$ & $0.016$  \\ 
$1$ & $1.07$ & $0.046$ \\ 
$0.8$ & $0.78$ & $-0.076$ \\ 
$0.8$ & $0.79$ & $-0.049$ \\ 
$0.8$ & $0.81$ & $0.017$  \\ 
$0.8$ & $0.82$ & $0.041$ \\ 
\hline

\end{tabular}
\caption{Table of smallest eigenvalues for varying $\alpha$ and $s$ with $n=5000$.\label{Table 1}}
\end{table}
\begin{remark}
Although Theorem \ref{main theorem} and Proposition \ref{alpha>2} hold for $m=\Theta(n^s)$, for definiteness we fix $m=\lfloor n^s\rfloor$. For $m=a\times n$ for fixed $a>0$, the transition of positivity is at exponent $1$. For the critical exponent to be less than $1$, we need $m=\Theta(n^s)$ with $ s<1$, which is much smaller, unlike in the study of spectrum of Wishart matrices.
\end{remark}
A standard way to study the distribution of eigenvalues of a random matrix is to look at the limit of empirical spectral distributions using method of moments. For example, Wigner's proof of semi-circle law for Gaussian ensemble uses this method (For more see \cite{AGZ}). In our case, the entries of the matrix $B_{n,\alp,s}$ are sums of products of random variables and the entries on the same row or column are correlated. The entrywise absolute fractional power makes this problem intractable, if we try to use method of moments. As we are interested
only in the existence of negative eigenvalues, we manage to avoid computing all the moments.

\subsection{Outline of the paper:}
\hfill

First we prove Proposition \ref{alpha>2} in Section \ref{sec a>2}. This is done using Gershgorin's circle theorem and the sub-exponential Bernstein's inequality. Note that this proposition is not needed to prove Theorem \ref{main theorem}.

The proof of Theorem \ref{main theorem} is divided into two parts. In the first part of the proof, we consider the range $\alp<s$.  Let $C_{n,\alp,s}:=\frac{B_{n,\alp,s}}{n^{\frac{s-\alpha}{2}}}$. For ease of notation, we write $C_{n,\alp,s}$ as $C_m$. $C_m$ is a $m\times m$ matrix where $m=\lfloor n^s\rfloor$.
  Define the diagonal matrix $D_m$, with $D_m(i,i):=C_m(i,i)-\frac{\ell_{\alpha}}{n^{s/2}}$ and $E_m:=C_m-D_m-\frac{\ell_{\alpha}}{n^{s/2}}J_m$, where $\ell_{\alpha}$ is as defined in Subsection \ref{Notations}. We use the following lemma, whose proof is given in Section \ref{sec a<1}, to conclude that EESD of $B_{n,\alp,s}$ has positive weight on negative reals. 
\begin{lemma}\label{Lemma 1 of alpha<1}
Let $\bar{\mu}_{E_m}$ be the EESD of $E_m$. Then\\
i) Limit of first moment of $\bar{\mu}_{E_m}$ is $0$\\
ii) Limit of second moment of $\bar{\mu}_{E_m}$ is a positive constant\\
iii) The fourth moments of $\bar{\mu}_{E_m}$ are uniformly bounded.
\end{lemma}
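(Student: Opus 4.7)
My plan is to compute the first, second, and fourth moments of $\bar{\mu}_{E_m}$ directly as $\tfrac{1}{m}\mathbb{E}[\operatorname{tr}(E_m^{k})]$ for $k=1,2,4$, expanding each trace as a sum over cyclic tuples $(i_1,\dots,i_k)$ with $i_j\ne i_{j+1}$ (since $E_m$ has zero diagonal by construction) and estimating each combinatorial class separately. Part (i) is immediate: $\operatorname{tr}(E_m)\equiv 0$, so the first moment vanishes identically.

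For (ii), set $Y_{ij}:=\sqrt{n}\,A_{n,s}(i,j)$ and $W_{ij}:=|Y_{ij}|^{\alpha}-\ell_{\alpha}$ for $i\ne j$, so that $E_m(i,j)=n^{-s/2}W_{ij}+o(n^{-s/2})$ off the diagonal. By the classical CLT, $Y_{ij}\Rightarrow \mathcal{N}(0,1)$, and the Gaussian tails of the summands provide uniform integrability, giving $\mathbb{E}[W_{ij}^{2}]\to \operatorname{Var}(|Z|^{\alpha})>0$. The trace expansion then yields
\[
\tfrac{1}{m}\mathbb{E}[\operatorname{tr}(E_m^{2})]=\tfrac{1}{mn^{s}}\sum_{i\ne j}\mathbb{E}[W_{ij}^{2}]+o(1)\ \longrightarrow\ \operatorname{Var}(|Z|^{\alpha}),
\]
using $m(m-1)/(mn^{s})\to 1$ since $m=\lfloor n^{s}\rfloor$.

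The real work is in (iii). I would split
\[
M_4 \;:=\; \tfrac{1}{mn^{2s}}\sum_{(i_1,i_2,i_3,i_4)} \mathbb{E}\bigl[W_{i_1 i_2}W_{i_2 i_3}W_{i_3 i_4}W_{i_4 i_1}\bigr]
\]
by the number of distinct indices in $(i_1,i_2,i_3,i_4)$. The two-distinct case ($i_1=i_3$, $i_2=i_4$) contributes $O(m^{2})/(mn^{2s})=O(1/m)$ by boundedness of $\mathbb{E}[W^{4}]$. The three-distinct case (exactly one of $i_1=i_3$, $i_2=i_4$ holds) reduces to sums of $\mathbb{E}[W_{ab}^{2}W_{ac}^{2}]$, each bounded by $\mathbb{E}[W^{4}]$ via Cauchy--Schwarz, and with prefactor $m^{3}/(mn^{2s})=O(1)$ this yields an $O(1)$ contribution. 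In the all-distinct case, where naively $m^{4}/(mn^{2s})\sim m$ would diverge, cancellation must be exhibited. Conditioning on $X_{i_1}$ and $X_{i_3}$ makes $(W_{i_1i_2},W_{i_2i_3})$ (a function of $X_{i_2}$) and $(W_{i_3i_4},W_{i_4i_1})$ (a function of $X_{i_4}$) conditionally independent, and by the cyclic symmetry of the expression
\[
\mathbb{E}[W_{i_1i_2}W_{i_2i_3}W_{i_3i_4}W_{i_4i_1}]\;=\;\mathbb{E}\bigl[f(X_{i_1},X_{i_3})^{2}\bigr],\quad f(u,v):=\mathbb{E}[W_{i_1 i_2}W_{i_2 i_3}\mid X_{i_1}{=}u,X_{i_3}{=}v].
\]
Conditional on $X_{i_1},X_{i_3}$, the pair $(Y_{i_1 i_2},Y_{i_2 i_3})$ is centered bivariate Gaussian with variances $\sigma_l^{2}=|X_{i_l}|^{2}/n$ ($l=1,3$) and covariance $\rho=A_{n,s}(i_1,i_3)$. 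An $L^{2}$-Hermite expansion of $|\cdot|^{\alpha}-\ell_{\alpha}$ together with Mehler's formula gives $f=O(\rho^{2})+O((\sigma_1^{2}-1)(\sigma_3^{2}-1))$, each summand of $L^{2}$-size $O(1/n)$ since $\rho,\sigma_l^{2}-1=O(n^{-1/2})$. Hence $\mathbb{E}[f^{2}]=O(1/n^{2})$, and the all-distinct contribution is $O(m^{3}/n^{2+2s})=O(n^{s-2})\to 0$.

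The hardest step is the quantitative control of $f$: since $y\mapsto|y|^{\alpha}$ is only H\"older-regular at $0$ for non-integer $\alpha$, a naive Taylor remainder is delicate near the origin, and it is cleanest to work with the Hermite expansion of $|\cdot|^{\alpha}-\ell_{\alpha}$ in $L^{2}(\gamma)$ (where $\gamma$ is the standard Gaussian measure), whereupon Mehler's formula exhibits the $O(\rho^{2})$ correction to the independent product directly and the $(\sigma^{2}-1)$-corrections drop out from elementary moment estimates on chi-square variables.
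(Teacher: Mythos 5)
Your proposal is correct and its overall architecture matches the paper's: the first moment vanishes because $E_m$ has zero diagonal; the second moment is computed from the off-diagonal entries via the CLT for $\langle R_1,R_2\rangle/\sqrt{n}$; and the fourth moment is expanded as a sum over closed walks of length $4$, sorted by the number of distinct indices, with the all-distinct case handled by conditioning on two non-adjacent rows $X_{i_1},X_{i_3}$ to exploit conditional independence and to show the conditional expectation $f(X_{i_1},X_{i_3})$ (the paper's $Y_{1,3}$) is of order $1/n$. The one genuine technical difference is in how you prove that bound: the paper writes $Y_{1,3}=\sigma_1^{\alpha}\sigma_3^{\alpha}I(\rho_{13})+\ell_{\alpha}^2(\sigma_1^{\alpha}-1)(\sigma_3^{\alpha}-1)$, where $I(\rho)=\E[(|Z_1|^{\alpha}-\ell_{\alpha})(|Z_3|^{\alpha}-\ell_{\alpha})]$ for $\rho$-correlated standard Gaussians, and shows $I(\rho)/\rho^2$ is bounded by differentiation under the integral and L'H\^opital near $\rho=0$, whereas you invoke the Hermite expansion of $|x|^{\alpha}-\ell_{\alpha}$ in $L^2(\gamma)$ and Mehler's formula, so $I(\rho)=\sum_{k\ge 2}c_k^2 k!\,\rho^k$ with the linear term absent by parity, which makes the $O(\rho^2)$ bound immediate. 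Both methods rest on the same fact (evenness of $|x|^{\alpha}$ kills the first Hermite coefficient, equivalently $I'(0)=0$), but your route is arguably tidier and avoids the regularity-at-the-origin worry you flag. One minor point: since $E_m(i,j)=n^{-s/2}W_{ij}$ exactly (no lower-order correction), your ``$+o(n^{-s/2})$'' is harmless but unnecessary; and note the paper proves the stronger statement that all $k$-th moments $\E[(nY_{1,3})^k]$ are uniformly bounded (not just $k=2$) because it re-uses this lemma in the $\alpha>s$ range, whereas $\E[f^2]=O(n^{-2})$ suffices for the present lemma, as you correctly observe.
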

  Using a concentration of measure result, we show that with high probability, $B_{n,\alp,s}$ has negative eigenvalues. This is done in Section \ref{sec a<1}.

In the second part of the proof, we consider the range $s<\alp$. We further divide this range by looking at $\left(\frac{k+1}{k}\right)s<\alp$, where $k$ is an integer greater than $1$ and let $k\rightarrow\infty$. For $\left(\frac{k+1}{k}\right)s<\alp$, we consider $C_m$, a modification of $B_{n,\alp,s}$, whose EESD has $2k$-th moment converging to $0$ to conclude that the probability of $B_{n,\alp,s}$ having a negative eigenvalue converges to 0. We then let $k$ be arbitrarily large. This is done in Section \ref{sec a>1}.



\subsection{Notation}\label{Notations}

\hfill\\

1)   $m=\lfloor n^s\rfloor$.

2)   $\lambda_1(A)$ and $\lambda_m(A)$ denote the smallest and largest eigenvalues of $A$ respectively.

3)   $R_i$ denotes the $i$th row of $X_n$. ($R_i^T\sim N(0, I_n)$ in Sections \ref{sec a<1}, \ref{sec a>1} but not necessarily in Section \ref{sec a>2}).

4)   $\rho_{ij}=\frac{\langle R_i,R_j\rangle}{\lVert R_i\rVert\lVert R_j\rVert}$.

5)  $\ell_{\alpha}=\mathbb{E}[|Z|^{\alpha}]$, where $Z$ is a standard normal random variable.

6)   $J_n=$ All ones matrix of size $n\times n$ and $I_n=$ $n\times n$ identity matrix.

7)   $\mathcal{F}_{i,j}=$ The sigma algebra generated from the $i$th row and $j$th row of $X_n$.

8)   $\sigma_i=\lVert R_i\rVert/\sqrt{n}$.

9)   $Y_{ij}=\mathbb{E}\left[\left(\left|\frac{\langle R_i,R_k\rangle}{\sqrt{n}}\right|^{\alpha}-m_{\alpha}\right)\left(\left|\frac{\langle R_k,R_j\rangle}{\sqrt{n}}\right|^{\alpha}-m_{\alpha}\right)\bigg|\mathcal{F}_{i,j}\right]$.

\section{Proof of Proposition \ref{alpha>2}} \label{sec a>2}
In this section we prove Proposition \ref{alpha>2}.
\begin{proof}[Proof of Proposition~\ref{alpha>2}] For ease of notation, we write $B_{n,\alp,s}$ as $B_n$.
The diagonal entries of $B_n$ are of the form $\left(\frac{\langle R_i,R_i\rangle}{n}\right)^{\alpha}$ and off-diagonal entries are of the form $\left|\frac{\langle R_i,R_j\rangle}{n}\right|^{\alpha}$. Note that all the off-diagonal entries are identically distributed and all the diagonal entries are identically distributed.
First we give an upper bound for the probability that $\sum_{i=2}^{m}(B_n)_{1i}>\varepsilon$. 
\begin{align*}
\mathbb{P}\left(\sum_{i=2}^{m}(B_n)_{1i}>\varepsilon\right)\leq m \mathbb{P}\left((B_n)_{12}>\frac{\varepsilon}{m}\right).
\end{align*}  

Note that $(B_n)_{12}$ is a function of sum of $n$ independent sub-exponential random variables (product of independent Gaussians is sub-exponential (Lemma $2.7.7$ of \cite{RV}). We now recall the Bernstein inequality for sub-exponential random variables from \cite{RV}.

\begin{theorem}{(Theorem $2.8.1$ of \mbox{\cite{RV})}}\label{Bernstein}
Let $X_1,X_2,\dots,X_N$ be independent, mean zero, sub-exponential random variables. Then, for every $t\geq 0$, we have  
\begin{align*}
\mathbb{P}\left({\left|\sum_{i=1}^NX_i\right|\geq t}\right)\leq 2
\exp\left[-c\min{\left(\frac{t^2}{\sum_{i=1}^N\lVert X_i\rVert_{\psi_1}^2},\frac{t}{\max_i{\lVert X_i\rVert}_{\psi_1}}\right)}\right]\end{align*}
where $c>0$ is an absolute constant and $\lVert X\rVert_{\psi_1}$ is the sub-exponential norm of $X$.
\end{theorem}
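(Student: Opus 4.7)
The statement is the classical Bernstein-type inequality for sums of independent sub-exponential variables, so my plan is to follow the standard Chernoff-type argument and split the tail into a sub-Gaussian regime (for moderate $t$) and a purely exponential regime (for large $t$).

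The first step is a single-variable MGF estimate: for any mean-zero sub-exponential $X$ with $\lVert X\rVert_{\psi_1}\leq K$, I would show there are absolute constants $c_1, C_1>0$ such that for all $|\lambda|\leq c_1/K$,
\[
\mathbb{E}\bigl[e^{\lambda X}\bigr] \leq \exp(C_1 \lambda^2 K^2).
\]
To get this, I would recall that the $\psi_1$-norm condition is equivalent to the polynomial moment bound $(\mathbb{E}|X|^p)^{1/p}\leq C p K$ for all $p\geq 1$. Expanding $e^{\lambda X}=1+\lambda X+\sum_{p\geq 2}\lambda^p X^p/p!$, using $\mathbb{E}X=0$ to kill the linear term, applying the moment bound to each remaining term, and invoking $p^p/p!\leq e^p$ reduces the series to a geometric-type tail in $|\lambda|K$, which converges once $|\lambda|K$ is smaller than an absolute constant, giving the quadratic-in-$\lambda$ bound.

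The second step is to tensorize: by independence,
\[
\mathbb{E}\exp\Bigl(\lambda\sum_{i=1}^{N} X_i\Bigr)=\prod_{i=1}^{N}\mathbb{E}[e^{\lambda X_i}]\leq \exp\Bigl(C_1\lambda^2\sum_{i=1}^{N}\lVert X_i\rVert_{\psi_1}^2\Bigr),
\]
provided $|\lambda|\leq c_1/\max_i\lVert X_i\rVert_{\psi_1}$, and Markov's inequality then yields
\[
\mathbb{P}\Bigl(\sum_i X_i\geq t\Bigr)\leq \exp\Bigl(-\lambda t + C_1\lambda^2 \sum_i\lVert X_i\rVert_{\psi_1}^2\Bigr).
\]
The third step is to optimize $\lambda$ subject to the constraint $\lambda\leq c_1/\max_i\lVert X_i\rVert_{\psi_1}$. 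The unconstrained minimizer is $\lambda^{\star}=t/(2C_1\sum_i\lVert X_i\rVert_{\psi_1}^2)$. If $\lambda^{\star}$ satisfies the constraint, one gets the sub-Gaussian tail $\exp\bigl(-t^2/(4C_1\sum_i\lVert X_i\rVert_{\psi_1}^2)\bigr)$. Otherwise, $\lambda^{\star}$ exceeds the bound, and I would instead take $\lambda$ equal to $c_1/\max_i\lVert X_i\rVert_{\psi_1}$, which produces the linear tail $\exp\bigl(-c_2 t/\max_i\lVert X_i\rVert_{\psi_1}\bigr)$. Writing both cases as a single expression gives the minimum of the two exponents that appears in the statement, and applying the same argument to $-X_i$ produces the two-sided bound with the factor of $2$.

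The main obstacle is obtaining the single-variable MGF bound with clean absolute constants; the equivalences between tail, moment, and MGF characterizations of sub-exponentiality always cost constants, so some bookkeeping is needed to keep the argument quantitative. Once that is in hand, the tensorization step and the two-regime Chernoff optimization are essentially mechanical and assemble into the claimed inequality.
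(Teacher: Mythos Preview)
Your sketch is the standard Chernoff argument and is correct; in fact it is essentially the proof given in the cited reference. Note, however, that the paper does not prove this statement at all: it is quoted verbatim as Theorem~2.8.1 of \cite{RV} and used as a black box, so there is no ``paper's own proof'' to compare against.
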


Bernstein's inequality and the fact that $m=\lfloor n ^s\rfloor$ gives us that
\begin{align*}
\mathbb{P}\left((B_n)_{12}>\frac{\varepsilon}{m}\right)&=\mathbb{P}\left(\left|\langle R_1,R_2\rangle\right|\geq n\left(\frac{\varepsilon}{m}\right)^{1/\alpha}\right)\\
&\leq 2\exp\left(-c_1n^{1-\frac{2s}{\alpha}}\right)
\end{align*} 
for some constant $c_1=c_1(\varepsilon)$. 
This implies that 
\begin{align*}
 \mathbb{P}\left(\sum_{i=2}^{m}(B_n)_{1i}>\varepsilon\right)\leq 2m\exp(-c_1n^{1-\frac{2s}{\alpha}}). 
\end{align*}

Using the identical distribution of off-diagonal entries, we get that 
\begin{align}\label{eq:Gersh_1}
\mathbb{P}\left(\bigcup_{i=1}^{m}\left(\sum_{j=1,j\neq i}^{m}(B_n)_{ij}>\varepsilon\right)\right)\leq 2m^2\exp(-c_1n^{1-\frac{2s}{\alpha}}).
\end{align} 

For the diagonal entry $(B_n)_{11}$, we have 
\begin{align*}
\mathbb{P}\left((B_n)_{11}\leq 1-\varepsilon\right)&=\mathbb{P}\left(\frac{\langle R_1,R_1\rangle}{n}\leq (1-\varepsilon)^{1/\alpha}\right)\\
&\leq \mathbb{P}\left({\langle R_1,R_1\rangle}-n\leq n((1-\varepsilon)^{1/\alpha}-1)\right)\\
&\leq 2\exp\left(-c_2n\right),
\end{align*} 
for a constant $c_2=c_2(\varepsilon,\alpha)$. Here we have used Theorem \ref{Bernstein} in the last inequality, as ${\langle R_1,R_1\rangle}$ $-n$ is a sum of $n$ mean $0$, i.i.d, sub-exponential random variables and $t=n((1-\varepsilon)^{1/\alpha}-1)$.

This implies that
\begin{align}\label{eq:Gersh_2}
\mathbb{P}\left(\bigcup_{i=1}^{m}\left((B_n)_{ii}\leq 1-\varepsilon\right)\right)\leq 2m\exp(-c_2n).
\end{align}
Similarly 
\begin{align}\label{eq:Gersh_3}
\mathbb{P}\left(\bigcup_{i=1}^{m}\left((B_n)_{ii}\geq 1+\varepsilon\right)\right)\leq 2m\exp(-c_2n).
\end{align}

Applying Gershgorin circle theorem (Theorem $6.1.1$ of \cite{HJ}) to $B_n$, using \eqref{eq:Gersh_1}, \eqref{eq:Gersh_2}, \eqref{eq:Gersh_3}, gives us that, with probability at least $1-4m^2\exp(-c_3n^{1-\frac{2s}{\alpha}}),\ \lambda_{1}\geq 1-2\varepsilon$ and $\lambda_{m}\leq 1+2\varepsilon$. Here $c_3>0$ depends on $\varepsilon$ and $\alpha$. As $\alpha>2s$, this completes the proof of Proposition \ref{alpha>2}. 
\end{proof}

\section{$\alpha<s$ range}\label{sec a<1}
In this section we prove Theorem \ref{main theorem} for the range $\alp<s$.  We define a few terms here which will be used in the rest of the article.
Empirical spectral distribution of a symmetric random matrix $A_n$ is the random probability measure $\mu_{A_n}:=\frac{1}{n}\sum\limits_{i=1}^n\delta_{\lambda_i}$, where $\lambda_i$s are the eigenvalues of $A_n$. Expected empirical spectral distribution(EESD) of $A_n$ is the probability measure $\bar{\mu}_{A_n}$ such that $\int_{\mathbb{R}}fd\bar{\mu}_{A_n}=\mathbb{E}\left[\int_{\mathbb{R}}f\ d\mu_{A_n}\right]$, for all bounded continuous functions $f$ (For more see \cite{AGZ}). We prove the following lemma which implies Theorem \ref{main theorem} for the range $\alp<s$.

\label {sec a<1}
\begin{lemma}\label{alpha<1}
Fix $\alpha<s$. Then $\mathbb{P}(\lambda_1(C_{n,\alp,s})<0)\rightarrow 1$, as $n\rightarrow \infty$.
\end{lemma}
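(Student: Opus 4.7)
The plan is to use the decomposition $C_m = \frac{\ell_\alpha}{n^{s/2}} J_m + D_m + E_m$ introduced just before the statement, and exploit the distinct structural role of each summand. The rank-one positive semi-definite piece $\frac{\ell_\alpha}{n^{s/2}} J_m$ contributes at most one extra positive eigenvalue to $C_m$; the diagonal matrix $D_m$ will have operator norm tending to zero and be absorbed as a negligible Weyl perturbation; the core work is to show that $E_m$ itself has at least two eigenvalues bounded away from $0$ on the negative side, using Lemma~\ref{Lemma 1 of alpha<1}.

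First I would reduce to $E_m$. Since $\frac{\ell_\alpha}{n^{s/2}} J_m$ is positive semi-definite of rank one, Cauchy interlacing for rank-one positive perturbations gives $\lambda_1(C_m) \le \lambda_2(E_m + D_m)$. The diagonal entries $D_m(i,i) = (\lVert R_i\rVert^2/n)^\alpha\, n^{-(s-\alpha)/2} - \ell_\alpha n^{-s/2}$ can be controlled by standard $\chi^2$ concentration of $\lVert R_i\rVert^2$ about $n$ together with a union bound over $i \le m$; since $s > \alpha$, this gives $\lVert D_m\rVert_{\mathrm{op}} = O\bigl(n^{-(s-\alpha)/2}\bigr)$ with probability tending to $1$. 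Weyl's inequality then reduces the task to establishing
\[
\mathbb{P}\!\left(\lambda_2(E_m) \le -\delta\right) \to 1
\]
for some deterministic $\delta > 0$.

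For this final estimate I would use Lemma~\ref{Lemma 1 of alpha<1} together with a concentration argument. By construction $E_m$ has zero diagonal, so $\mathrm{tr}(E_m) = \sum_i \lambda_i(E_m) = 0$ identically. Parts (ii) and (iii) of the lemma give $\frac{1}{m}\mathbb{E}[\mathrm{tr}(E_m^2)] \to c > 0$ and $\frac{1}{m}\mathbb{E}[\mathrm{tr}(E_m^4)] \le C$, and the inequality $(\mathrm{tr}(E_m^2))^2 \le m\,\mathrm{tr}(E_m^4)$ combined with Markov's inequality yields $\frac{1}{m}\mathrm{tr}(E_m^2) \ge c/2$ and $\mathrm{tr}(E_m^4) \le 2Cm$ with probability tending to one. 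On this event the moment constraints $\sum \lambda_i = 0$, $\sum \lambda_i^2 \ge (c/2)m$ and $\sum \lambda_i^4 \le 2Cm$ are enough, via a Paley--Zygmund style argument on the empirical spectral measure, to force a positive fraction of the eigenvalues of $E_m$ to be negative and bounded above by $-\delta$; in particular $\lambda_2(E_m) \le -\delta$.

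The hardest step is this final deduction: passing from the three trace moment bounds to the pointwise estimate $\lambda_2(E_m) \le -\delta$. The scenario that must be ruled out is that almost all of the second moment is concentrated in one or two large positive eigenvalues while the negative part is spread over many eigenvalues of vanishing size. The bound $\sum \lambda_i^4 \le 2Cm$ caps the magnitude of any single eigenvalue by $(2Cm)^{1/4}$, so, combined with the identity $\sum_{\lambda_i > 0} \lambda_i = \sum_{\lambda_i < 0} |\lambda_i|$ coming from $\mathrm{tr}(E_m) = 0$, one can deduce that the number of eigenvalues of $E_m$ below $-\delta$ grows like a power of $m$, comfortably more than $2$. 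This is the step where the uniform fourth-moment bound of Lemma~\ref{Lemma 1 of alpha<1}(iii) is indispensable.
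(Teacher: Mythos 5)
Your structural decomposition $C_m = \tfrac{\ell_\alpha}{n^{s/2}}J_m + D_m + E_m$, the rank-one interlacing step for $J_m$, and the Weyl step for $D_m$ all match the paper, and that part of the plan is sound. The genuine gap is in the passage from the \emph{expected} trace moments supplied by Lemma~\ref{Lemma 1 of alpha<1} to the \emph{high-probability} statements $\tfrac{1}{m}\mathrm{tr}(E_m^2)\ge c/2$ and $\mathrm{tr}(E_m^4)\le 2Cm$. Markov's inequality gives only a one-sided bound: it can show $\mathbb{P}(\mathrm{tr}(E_m^4) > K\cdot\mathbb{E}[\mathrm{tr}(E_m^4)]) \le 1/K$, which for $K=2$ is probability $\le 1/2$, not ``tending to zero''; and Markov gives no lower-tail control at all on $\tfrac{1}{m}\mathrm{tr}(E_m^2)$. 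The Cauchy--Schwarz bound $(\mathrm{tr}(E_m^2))^2 \le m\,\mathrm{tr}(E_m^4)$ only controls the \emph{second moment} of $\tfrac{1}{m}\mathrm{tr}(E_m^2)$, not its variance, so the best it yields via Paley--Zygmund is $\mathbb{P}(\tfrac{1}{m}\mathrm{tr}(E_m^2) > c/2)\ge c'$ for some fixed $c'>0$, which is strictly weaker than ``probability tending to one.'' Lemma~\ref{Lemma 1 of alpha<1} controls moments of the \emph{expected} ESD, not fluctuations of the random ESD, so it cannot close this gap on its own.

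To repair the argument you would have to supply a concentration mechanism for linear eigenvalue statistics of $E_m$, and this is exactly what the paper does. The paper's proof first passes to a subsequence (by contradiction) along which $\bar{\mu}_{E_m}$ converges weakly, uses Lemma~\ref{Lemma 1 of alpha<1} to show the limit has mean zero and positive variance (hence positive mass on $(-\infty,-\omega)$), and then invokes the McDiarmid-type concentration of Guntuboyina--Leeb (Theorem~\ref{Guntuboyina}) for the test function $\mathbf{1}_{(-\infty,-\omega)}$, together with the standard rank inequality for the CDF perturbation, to transfer positive mass on $(-\infty,-\omega)$ from the expected ESD to the random ESD with probability tending to one. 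This sidesteps the Paley--Zygmund style bookkeeping over the three trace constraints entirely, and it is the concentration theorem --- not any moment bound --- that is doing the work you are attributing to Markov. Alternatively you could try to prove directly that $\mathrm{Var}\bigl(\tfrac{1}{m}\mathrm{tr}(E_m^2)\bigr)\to 0$ by another walk expansion, but that is a new computation, not a corollary of Lemma~\ref{Lemma 1 of alpha<1}(iii). The deterministic part of your final deduction (constraints $\sum\lambda_i=0$, $\sum\lambda_i^2\ge (c/2)m$, $\sum\lambda_i^4\le C'm$ force many eigenvalues below a fixed $-\delta$) is also stated without proof and is not entirely obvious, but it is a separate, secondary issue; the concentration gap is the essential one.
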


\begin{proof}[Proof of Lemma ~\ref{alpha<1}]


 We complete the proof of Lemma \ref{alpha<1} assuming Lemma \ref{Lemma 1 of alpha<1} and then provide the proof of Lemma \ref{Lemma 1 of alpha<1}. For the sake of contradiction assume that $\mathbb{P}\left(\lambda_1(C_m)<0\right)$ does not converge to $1$, then by going to a subsequence we may assume that $\exists \ \varepsilon>0$ such that
$\mathbb{P}\left(\lambda_1(C_m)>0\right)>\varepsilon$ and $\bar{\mu}_{E_m}$ converge weakly to some probability distribution $\mu$ (Using $(ii)$ of Lemma \ref{Lemma 1 of alpha<1} we get the tightness of $\bar{\mu}_{E_m}$).

 Now $\mu$ must have mean $0$, positive variance. Indeed, if a sequence of probability distributions $\bar{\mu}_{E_m}$ converge weakly to $\mu$, then by Skhorokhod's theorem, on some probability space there exist random variables $T_m\sim \bar{\mu}_{E_m}$ and $T\sim \mu$ such that $T_m$ converge almost surely to $T$. Now as $\bar{\mu}_{E_{m}}$ have uniform bound on second moments, we get that $T_m$ are uniformly integrable. This implies that the first moment of $T$ is the limit of first moments of $T_m$. Similarly as the fourth moments of $T_m$ are uniformly bounded, the second moment of $T$ is the limit of second moments of $T_m$. Thus $\mu$ has mean $0$, positive variance.
 
As $\mu$ has zero mean and positive variance, $\mu(-\infty,-\omega)\geq \eta$ for some $\eta,\omega>0$. This gives us that
\begin{align}\label{eq:limit}
    \bar{\mu}_{E_m}(-\infty,-\omega)>\frac{\eta}{2}
\end{align}
for large enough $n$. We would like to say with high probability, empirical spectral distributions of $E_m$ also have positive weight on the negative reals. This would imply the existence of negative eigenvalues, with high probability. Here we make use of the following McDiarmid-type concentration result due to Guntuboyina and Leeb \cite{GL}. For a $n\times n$ symmetric matrix $A$, let $\mu_A$ denote the probability measure $\mu_A:=\frac{1}{n}\sum_{i=1}^n\delta_{\lambda_i}$, where $\lambda_i$s are the eigenvalues of $A$.
 Let $F_{\mu_A}$ denote the cumulative distribution function of ${\mu}_{A}$ and $F_{\mu_A}(f)=\int_{\mathbb{R}}f d\mu_{A}$. The Kolmogorov-Smirnov distance between two probability measures $\mu,\mu'$ is defined as $d_{{KS}}(\mu,\mu'):=\lVert F_{\mu}-F_{\mu'}\rVert_{\infty}$. Let $V_g([a,b])$ denote the total variation of the function $g$ on an interval $[a,b]$ and $V_g(\mathbb{R}):=\sup_{[a,b]}V_g([a,b])$.
\begin{theorem}[Theorem $6$ of \cite{GL}]\label{Guntuboyina}
Let $M$ be a random symmetric $n\times n$ matrix that is a function of $m$ independent random quantities $Y_1,Y_2,\dots,Y_m,$ i.e., $M=M(Y_1,Y_2,\dots,Y_m)$. Write $M_{(i)}$ for the matrix obtained from $M$ after replacing $Y_i$ by an independent copy, i.e., $M_{(i)}=M(Y_1,\dots,Y_{i-1},Y_i^*,Y_{i+1},\dots,Y_m)$ where $Y_{i}^*$ is distributed as $Y_i$ and independent of $Y_1,Y_2\dots,Y_m.$ For $S=M/\sqrt{m}$ and $S_{(i)}=M_{(i)}/\sqrt{m}$, assume that 
\begin{align*}
\lVert F_S-F_{S_{(i)}}\rVert_\infty\leq \frac{r}{n}
\end{align*} 
holds (almost surely) for each $i=1,2,\dots,m$ and for some (fixed) integer $r$. Finally, assume that $g:\mathbb{R}\rightarrow\mathbb{R}$ is of bounded variation on $\mathbb{R}$. For each $\varepsilon>0,$ we then have
\begin{align*}
\mathbb{P}\left(|F_S(g)-\mathbb{E}[F_S(g)]|\geq \varepsilon\right)\leq 2\exp\left[-\frac{n^2 2\varepsilon^2}{mr^2V_g^2(\mathbb{R})}\right].
\end{align*}
\end{theorem} 

We apply Theorem \ref{Guntuboyina} where $E_m$ is the matrix $M$ which is a function of the $\lfloor n^s\rfloor$ rows (independent) of $X_n$. In order to apply Theorem \ref{Guntuboyina}, we need to show  
\begin{align}\label{rank condition}
\lVert F_{E_m}-F_{E_{m{{(i)}}}}\rVert_\infty\leq \frac{r}{\lfloor n^s\rfloor}
\end{align} 
almost surely. Here $E_{m(i)}$ is the matrix obtained when $i$th row of $X_n$ is replaced by an independent and identical copy. To show \eqref{rank condition}, we use that the rank($E_m-E_{m(i)})\leq 2$ and the
 standard rank inequality (Lemma $2.5$ of \cite{CB}) which gives us

\begin{align*}
\lVert F_{E_m}-F_{E_{m{{(i)}}}}\rVert_\infty\leq \frac{2}{\lfloor n^s\rfloor}.
\end{align*}

Note that $V_f(\mathbb{R})$ is finite and independent of $n$. We can now apply Theorem \ref{Guntuboyina} to the matrices $E_m$. Using the function $f=\mathbbm{1}_{(-\infty,-\omega)}$ as the bounded variation function and applying Theorem \ref{Guntuboyina}, we get 
\begin{align}\label{eq:concentration}
\mathbb{P}\left(|F_{E_m}(f)-\mathbb{E}[F_{E_m}(f)]|\geq \eta/4\right)\leq 2\exp\left(-{c\lfloor n^s\rfloor\eta^2}\right)
\end{align}
for some $c>0$.
Using \eqref{eq:limit} and \eqref{eq:concentration}, we get that, for large enough $n$
\begin{align*}
\mathbb{P}(\mbox{fraction of eigenvalues of }E_{n} \mbox{ less than }-\omega/2\geq \eta/4)\geq 1-\frac{\varepsilon}{2}. 
\end{align*}

$E_{m}$ is almost $C_{m}$, with diagonals made $0$ and then off-diagonals are subtracted by $\ell_{\alpha}/\lfloor n^s\rfloor$.

Using \eqref{eq:Gersh_3}, it can be seen that 
\begin{align}
\mathbb{P}\left(\bigcup_{i=1}^{m}\left((C_n)_{ii}\geq n^{\frac{\alpha-s}{2}}(1+\varepsilon)\right)\right)\leq 2m\exp(-c_2n)\label{eq:Gersh_4}\\
\mathbb{P}\left(\bigcup_{i=1}^{m}\left((D_n)_{ii}\geq n^{\frac{\alpha-s}{2}}\left(1+\varepsilon-\frac{\ell_{\alpha}}{n^{\alpha/2}}\right)\right)\right)\leq 2m\exp(-c_2n).\label{eq:Gersh_5}
\end{align}

 Weyl's inequality (Theorem $4.3.1$ of \cite{HJ}) bounds the amount of perturbation of eigenvalues due to perturbation of a matrix. Using Weyl's inequality, along with \eqref{eq:Gersh_5} gives that, 
\begin{align*}
\mathbb{P}(\mbox{fraction of eigenvalues of }E_{m}+D_{m} \mbox{ less than }-\frac{\omega}{2}+n^{\frac{\alpha-s}{2}}(1+\varepsilon-\frac{m_{\alpha}}{n^{\alpha/2}})\geq \eta/4)\\
\geq 1-\frac{\varepsilon}{2}-2m\exp(-c_2n). 
\end{align*}

As rank($E_{m}+D_{m}-C_{m})=1 $ and $\alp<s$, using rank inequality (Lemma $2.5$ of \cite{CB}) again, we get that 

\begin{align*}
\mathbb{P}(\mbox{all the eigenvalues of }C_{m}\mbox{ are non-negative})<\frac{\varepsilon}{2}+2m\exp(-c_2n),
\end{align*}
which contradicts the earlier assumption. This completes the proof of Lemma \ref{alpha<1}.
\end{proof}
We now prove Lemma \ref{Lemma 1 of alpha<1}.

\begin{proof}[Proof of Lemma ~\ref{Lemma 1 of alpha<1}]

\textbf{Computation of moments of $\bar{\mu}_{E_m}$}: Before we start the computations, we make a note of the form of entries of $E_m$.

Diagonal entries: $(E_m)_{ii}=0$

Off diagonal entries: $(E_m)_{ij}=\frac{1}{n^{s/2}}\left(\left|\frac{\langle R_i,R_j\rangle}{\sqrt{n}}\right|^{\alpha}-\ell_{\alpha}\right)$

  We prove limits of first and second moments of $\bar{\mu}_{E_m}$ are $0$ and a positive value. 
  
  \textbf{Limit of first moments}: $\int_{\mathbb{R}}x\ d\bar{\mu}_{E_m}(x)=\mathbb{E}\left[\int_{\mathbb{R}}x\ d\mu_{E_m}(x)\right]=\frac{1}{m}\sum_{i=1}^m\mathbb{E}[(E_m)_{ii}]=0$. Hence
\begin{align*}
  \lim_{n\rightarrow\infty}\int_{\mathbb{R}}x\ d\bar{\mu}_{E_m}(x)&=0
\end{align*}
  
  \textbf{Limit of second moments}: $\int_{\mathbb{R}}x^2\ d\bar{\mu}_{E_m}(x)=\mathbb{E}\left[\int_{\mathbb{R}}x^2\ d\mu_{E_m}(x)\right]=\frac{1}{m}\sum_{i,j}(E_m)_{ij}^2$. As the off-diagonal entries are identically distributed, it is enough to look at the limit of
  $\sum_{i=1}^m\mathbb{E}[((E_m)_{1i})^2]$. 
\begin{align*}
   \lim_{n\rightarrow\infty} \sum_{i=1}^m\mathbb{E}[((E_m)_{1i})^2]= \lim_{n\rightarrow\infty} (m-1) \mathbb{E}[((E_m)_{12})^2] 
\end{align*}  
Using central limit theorem, uniform bound on $\mathbb{E}\left[\left(\frac{\langle R_1,R_2\rangle}{\sqrt{n}}\right)^4\right]$ and $m=\lfloor n^s\rfloor$, it is easy to see that the limit is $\mathbb{E}\left[\left(|Z|^{\alpha}-\ell_{\alpha}\right)^2\right]$.
  
  We now prove that the fourth moments of $\bar{\mu}_{E_m}$ are uniformly bounded. 
  
  \textbf{Uniform bound of fourth moments}:
  \begin{align*}
  \int_{\mathbb{R}}x^4\ d\bar{\mu}_{E_m}(x)=\frac{1}{m}\sum_{i_1i_2i_3i_4}\mathbb{E}\left[(E_m)_{i_1i_2}(E_m)_{i_2i_3}(E_m)_{i_3i_4}(E_m)_{i_4i_1}\right]. 
  \end{align*}
  This is a sum of expectations with each term corresponding to a closed walk of length $4$ on the complete graph $K_m$. It is enough to look at closed walks starting and ending at vertex $1$. Such walks can visit $2,3$ or $4$ different vertices, including the vertex $1$. 
\begin{align*}
  \int_{\mathbb{R}}x^4\ d\bar{\mu}_{E_m}(x)=\sum_{i,j,k\neq 1}\mathbb{E}\left[(E_m)_{1i}^4\right]+\mathbb{E}\left[(E_m)_{1i}^2(E_m)_{1j}^2\right]\\+\mathbb{E}\left[(E_m)_{1i}^2(E_m)_{ij}^2\right]+\mathbb{E}\left[(E_m)_{1i}(E_m)_{ij}(E_m)_{jk}(E_m)_{k1}\right]
\end{align*}
    
    The four terms in the above equation correspond to four different types of walks as shown below.

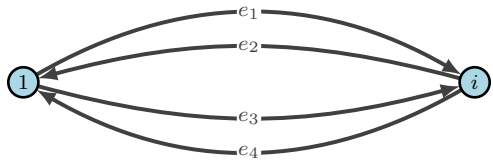
\begin{figure}[h!]
\begin{tikzpicture}
 \Vertex[label=$1$, size=0.4, x=10,y=10]{A} \Vertex[label=$i$, size=0.4, x=16,y=10]{B}
  \Edge[label=$e_1$,bend=30,Direct](A)(B)
  \Edge[label=$e_2$,bend=-15,Direct](B)(A)
  \Edge[label=$e_3$,bend=-15,Direct](A)(B)
  \Edge[label=$e_4$,bend=30,Direct](B)(A)
\end{tikzpicture}
\caption{The walk corresponding to $\mathbb{E}\left[(E_m)_{1i}^4\right]$}
\end{figure}

\begin{figure}[h!]
\begin{tikzpicture}
 \Vertex[label=$i$, size=0.4, x=5,y=10]{i} \Vertex[label=$1$, size=0.4, x=10,y=10]{1} \Vertex[label=$j$, size=0.4, x=15,y=10]{j}
  \Edge[label=$e_1$,bend=15,Direct](1)(i)
  \Edge[label=$e_2$,bend=15,Direct](i)(1)
  \Edge[label=$e_3$,bend=15,Direct](1)(j)
  \Edge[label=$e_4$,bend=15,Direct](j)(1)
\end{tikzpicture}
\caption{The walk corresponding to $\mathbb{E}\left[(E_m)_{1i}^2(E_m)_{1j}^2\right]$}
\end{figure}

\begin{figure}[h!]
\begin{tikzpicture}
 \Vertex[label=$1$, size=0.4, x=5,y=10]{1} \Vertex[label=$i$, size=0.4, x=10,y=10]{i} \Vertex[label=$j$, size=0.4, x=15,y=10]{j}
  \Edge[label=$e_1$,bend=15,Direct](1)(i)
  \Edge[label=$e_2$,bend=15,Direct](i)(j)
  \Edge[label=$e_3$,bend=15,Direct](j)(i)
  \Edge[label=$e_4$,bend=15,Direct](i)(1)
\end{tikzpicture}
\caption{The walk corresponding to $\mathbb{E}\left[(E_m)_{1i}^2(E_m)_{ij}^2\right]$}
\end{figure}
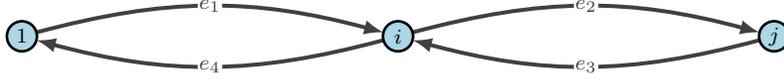

\begin{figure}[h!]
\begin{tikzpicture}
 \Vertex[label=$1$, size=0.4, x=5,y=10]{1} \Vertex[label=$i$, size=0.4, x=6,y=11]{i} \Vertex[label=$j$, size=0.4, x=7,y=10]{j}
 \Vertex[label=$k$, size=0.4, x=6,y=9]{k}
  \Edge[label=$e_1$,bend=30,Direct](1)(i)
  \Edge[label=$e_2$,bend=30,Direct](i)(j)
  \Edge[label=$e_3$,bend=30,Direct](j)(k)
  \Edge[label=$e_4$,bend=30,Direct](k)(1)
\end{tikzpicture}
\caption{The walk corresponding to $\mathbb{E}\left[(E_m)_{1i}(E_m)_{ij}(E_m)_{jk}(E_m)_{k1}\right]$}
\end{figure}

Using the fact that off-diagonal entries of $E_m$ are identically distributed, uniform bound on $\mathbb{E}\left[\left(\frac{\langle R_1,R_2\rangle}{\sqrt{n}}\right)^4\right]$ and central limit theorem, it can be seen that
\begin{align}\label{eq:2V}
\lim_{n\rightarrow\infty}\sum_{i\neq 1}\mathbb{E}\left[(E_m)_{1i}^4\right]=\lim_{n\rightarrow\infty}\frac{(m-1)}{m^2}\mathbb{E}\left[\left(\left|\frac{\langle R_1,R_2\rangle}{\sqrt{n}}\right|^{\alpha}-\ell_{\alpha}\right)^4\right]=0.
\end{align}
Using a similar argument as above it can be seen that
\begin{align}\label{eq:3V}
\lim_{n\rightarrow\infty}\sum_{i,j\neq 1}\mathbb{E}\left[(E_m)_{1i}^2(E_m)_{1j}^2\right]= \lim_{n\rightarrow\infty}\sum_{i,j\neq 1}\mathbb{E}\left[(E_m)_{1i}^2(E_m)_{ij}^2\right]=\mathbb{E}\left[\left(|Z_1|^{\alpha}-\ell_{\alpha}\right)^2\left(|Z_2|^{\alpha}-\ell_{\alpha}\right)^2\right],
\end{align}
where $Z_1,Z_2$ are i.i.d standard Gaussians.

If we prove that 
\begin{align}\label{eq:4V}
\lim_{n\rightarrow\infty}\sum_{i,j,k\neq 1}\mathbb{E}\left[(E_m)_{1i}(E_m)_{ij}(E_m)_{jk}(E_m)_{k1}\right]=0, 
\end{align}
then using \eqref{eq:2V}, \eqref{eq:3V}, \eqref{eq:4V}, we would have proved that fourth moments of $\bar{\mu}_{E_m}$ are uniformly bounded and we would be done with the proof of Lemma \ref{alpha<1}. Note that 
\begin{align}\label{eq:Y}
\lim_{n\rightarrow\infty}\sum_{i,j,k\neq 1}\mathbb{E}\left[(E_m)_{1i}(E_m)_{ij}(E_m)_{jk}(E_m)_{k1}\right]=
\end{align}
\begin{align*}  
\lim_{n\rightarrow\infty} m\mathbb{E}\left[\left(\left|\frac{\langle R_1,R_2\rangle}{\sqrt{n}}\right|^{\alpha}-\ell_{\alpha}\right)\left(\left|\frac{\langle R_2,R_3\rangle}{\sqrt{n}}\right|^{\alpha}-\ell_{\alpha}\right)\left(\left|\frac{\langle R_3,R_4\rangle}{\sqrt{n}}\right|^{\alpha}-\ell_{\alpha}\right)\left(\left|\frac{\langle R_4,R_1\rangle}{\sqrt{n}}\right|^{\alpha}-\ell_{\alpha}\right)\right]
\end{align*}

Let $\mathcal{F}_{1,3}$ denote the sigma algebra generated from the $1$st row and $3$rd row of $X_n$ and \[Y_{1,3}:=\mathbb{E}\left[\left(\left|\frac{\langle R_1,R_2\rangle}{\sqrt{n}}\right|^{\alpha}-\ell_{\alpha}\right)\left(\left|\frac{\langle R_2,R_3\rangle}{\sqrt{n}}\right|^{\alpha}-\ell_{\alpha}\right)\biggr|\mathcal{F}_{1,3}\right].\]

Note that using independence of $2$nd row and $4$th row of $X_n$, RHS of  \eqref{eq:Y} can be written as, 
$\lim_{n\rightarrow\infty}m\mathbb{E}[Y_{1,3}^2]$. 

We prove the below lemma from which it follows that $\lim_{n\rightarrow\infty}m\mathbb{E}[Y_{1,3}^2]=0$ and hence the fourth moments of $\bar{\mu}_{E_m}$ are uniformly bounded. Let $\rho_{ij}:=\frac{\langle R_i,R_j\rangle}{\lVert R_i\rVert\lVert R_j\rVert}$ and $\sigma_i:=\lVert R_i\rVert/\sqrt{n}$.
\begin{lemma}\label{main lemma_1}
$\mathbb{E}[(nY_{1,3})^k]$ is uniformly bounded by $M_k, \forall n,k\in\mathbb{N}$, where $M_k>0$ are some constants dependent on $k$.
\end{lemma}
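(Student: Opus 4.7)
The plan is to obtain a closed form for $Y_{1,3}$ as a function of $(\sigma_1,\sigma_3,\rho_{13})$, use Mehler's formula to show that this function vanishes to second order at the ``ideal'' point $(1,1,0)$, and convert this quadratic smallness into the moment bound via standard $\chi^2$-moment estimates. For Step 1, I note that conditional on $\mathcal{F}_{1,3}$ the pair $(\langle R_1,R_2\rangle/\sqrt{n},\langle R_3,R_2\rangle/\sqrt{n})$ has the distribution of $(\sigma_1 Z_1,\sigma_3 Z_2)$ for a standard bivariate Gaussian $(Z_1,Z_2)$ with correlation $\rho_{13}$. Expanding the product inside $Y_{1,3}$ and using $\mathbb{E}[|Z_i|^\alpha]=\ell_\alpha$ yields
\[Y_{1,3} \;=\; \sigma_1^\alpha \sigma_3^\alpha\, G(\rho_{13}) \;+\; \ell_\alpha^2\,(\sigma_1^\alpha - 1)(\sigma_3^\alpha - 1),\]
where $G(\rho) := \mathbb{E}[(|Z_1|^\alpha - \ell_\alpha)(|Z_2|^\alpha - \ell_\alpha)]$ at correlation $\rho$.

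For Step 2, the function $f(x) := |x|^\alpha - \ell_\alpha$ is even, has zero Gaussian mean, and satisfies $\mathbb{E}[f(Z)^2] = \ell_{2\alpha} - \ell_\alpha^2 < \infty$. Mehler's formula therefore gives
\[G(\rho) \;=\; \sum_{k \geq 1} \frac{(\hat f_{2k})^2}{(2k)!}\, \rho^{2k},\]
a power series in $\rho^2$ with non-negative coefficients (odd Hermite coefficients vanishing by the evenness of $f$, and $\hat f_0 = 0$ by mean zero). Consequently $\rho^2 \mapsto G(\rho)/\rho^2$ is non-decreasing on $[0,1]$ and bounded by $G(1) = \ell_{2\alpha} - \ell_\alpha^2$, which gives the crucial quadratic bound $|G(\rho)| \leq (\ell_{2\alpha} - \ell_\alpha^2)\,\rho^2$ on $[-1,1]$.

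For Step 3, since $|x|^k \leq 1 + x^{2k}$ and $(a+b)^{2k} \leq 2^{2k}(a^{2k} + b^{2k})$, it suffices to bound $\mathbb{E}[\,\cdot\,^{2k}]$ of the two summands. Substituting $\rho_{13}^2 = \langle R_1,R_3\rangle^2/(n^2\sigma_1^2\sigma_3^2)$ into the Step 2 bound,
\[\bigl|n\sigma_1^\alpha \sigma_3^\alpha G(\rho_{13})\bigr|^{2k} \;\leq\; (\ell_{2\alpha}-\ell_\alpha^2)^{2k}\, n^{-2k}\, \sigma_1^{2k(\alpha-2)}\sigma_3^{2k(\alpha-2)}\, \langle R_1,R_3\rangle^{4k}.\]
Conditioning on $R_1$, $\langle R_1,R_3\rangle \sim N(0, n\sigma_1^2)$, so $\mathbb{E}[\langle R_1,R_3\rangle^{4k}\mid R_1] = (4k-1)!!\,n^{2k}\sigma_1^{4k}$; the resulting expectation reduces to a product of bounded positive moments of $\sigma_1$ and the negative moment $\mathbb{E}[\sigma_3^{-2k(2-\alpha)}]$, which stays uniformly $O(1)$ via the explicit formula $\mathbb{E}[(\chi^2_n)^{-c}] = \Gamma(n/2-c)/(2^c\Gamma(n/2))$ valid for $n>2c$. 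For the second summand, splitting on the event $\{\sigma_i \geq 1/2\}$ (where $|\sigma_i^\alpha - 1| \leq C_\alpha|\sigma_i^2 - 1|$ by H\"older/Lipschitz control of $\sigma \mapsto \sigma^\alpha$) and using an exponentially small tail on the complement yields $\mathbb{E}[(\sigma_i^\alpha - 1)^{2k}] = O(n^{-k})$, which exactly cancels the $n^{2k}$ prefactor.

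The main obstacle is concentrated in Step 3: once $\rho_{13}^2$ is unpacked in terms of the unnormalized inner product $\langle R_1,R_3\rangle$ and $\|R_i\|$, a potentially singular factor $\sigma_i^{-2k(2-\alpha)}$ arises (for $\alpha<2$) and must be absorbed against the uniform-in-$n$ negative $\chi^2$-moment bound. Using Mehler's formula in Step 2, rather than a Taylor expansion of $|x|^\alpha$, is what sidesteps the non-smoothness of $x \mapsto |x|^\alpha$ at the origin when $\alpha<1$ and gives the quadratic bound with a clean finite constant.
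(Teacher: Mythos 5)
Your Step 1 decomposition of $Y_{1,3}$ into $\sigma_1^\alpha\sigma_3^\alpha G(\rho_{13}) + \ell_\alpha^2(\sigma_1^\alpha-1)(\sigma_3^\alpha-1)$ is exactly what the paper does (with $I(\rho)$ in place of your $G(\rho)$). Step 2 is a genuinely nicer route: where the paper establishes the quadratic bound $|I(\rho)|\leq M\rho^2$ by a somewhat hand-wavy appeal to L'Hospital's rule and differentiation under the integral sign, your Mehler expansion of $G$ exhibits $G(\rho)/\rho^2$ as a power series in $\rho^2$ with non-negative coefficients, immediately giving the explicit bound $(\ell_{2\alpha}-\ell_\alpha^2)\rho^2$ with no smoothness argument or case-splitting on $|\rho|$. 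That is cleaner and self-contained.

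Step 3 has a real gap, however. After substituting $\rho_{13}^2=\langle R_1,R_3\rangle^2/(n^2\sigma_1^2\sigma_3^2)$ you condition on $R_1$ and assert that $\mathbb{E}\bigl[\sigma_3^{-2k(2-\alpha)}\langle R_1,R_3\rangle^{4k}\mid R_1\bigr]$ ``reduces to a product'' of $\mathbb{E}[\langle R_1,R_3\rangle^{4k}\mid R_1]$ and $\mathbb{E}[\sigma_3^{-2k(2-\alpha)}]$. But conditionally on $R_1$, both $\sigma_3=\|R_3\|/\sqrt n$ and $\langle R_1,R_3\rangle$ are functions of $R_3$, and they are \emph{not} independent, so the conditional expectation does not factor. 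The step can be repaired: either apply Cauchy--Schwarz to separate the two factors, or decompose $R_3 = \langle R_3,\hat R_1\rangle\hat R_1 + R_3^\perp$ (with $\hat R_1 = R_1/\|R_1\|$), note that $\langle R_3,\hat R_1\rangle\sim N(0,1)$ and $\|R_3^\perp\|^2\sim\chi^2_{n-1}$ are independent given $R_1$, and dominate $\sigma_3^{-2k(2-\alpha)}\leq(\|R_3^\perp\|^2/n)^{-k(2-\alpha)}$. The paper avoids the issue altogether by never unpacking $\rho_{13}$: it keeps $n\rho_{13}^2$ as a single object, observes that all of its moments are uniformly bounded in $n$ (it is asymptotically $\chi^2_1$), and applies H\"older across the three factors $\sigma_1^\alpha,\sigma_3^\alpha,n\rho_{13}^2$. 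This sidesteps the negative $\chi^2$-moment bookkeeping entirely. Likewise your split on $\{\sigma_i\geq 1/2\}$ for the second summand is correct but unnecessary: for $\alpha<2$ the pointwise inequality $|\sigma^\alpha-1|\leq|\sigma^2-1|$ holds on all of $(0,\infty)$, giving $\sqrt n\,|\sigma_i^\alpha-1|\leq|\langle R_i,R_i\rangle-n|/\sqrt n$ directly, with no tail event needed.
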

\begin{proof}
\begin{align*}
Y_{1,3}&=\mathbb{E}\left[\left(\left|\frac{\langle R_1,R_2\rangle}{\sqrt{n}}\right|^{\alpha}-\ell_{\alpha}\right)\left(\left|\frac{\langle R_2,R_3\rangle}{\sqrt{n}}\right|^{\alpha}-\ell_{\alpha}\right)\biggr|\mathcal{F}_{1,3}\right]\\
&=\mathbb{E}\left[\sigma_1^{\alpha}\left(\left(\left|\frac{\langle R_1,R_2\rangle}{\sigma_1\sqrt{n}}\right|^{\alpha}-\ell_{\alpha}\right)+\left(\ell_{\alpha}-\frac{\ell_{\alpha}}{\sigma_1^{\alpha}}\right)\right)\sigma_3^{\alpha}\left(\left(\left|\frac{\langle R_2,R_3\rangle}{\sigma_3\sqrt{n}}\right|^{\alpha}-\ell_{\alpha}\right)+\left(\ell_{\alpha}-\frac{\ell_{\alpha}}{\sigma_3^{\alpha}}\right)\right) \biggr|\mathcal{F}_{1,3}\right]\\
&=\sigma_1^{\alpha}\sigma_3^{\alpha}\mathbb{E}[(|Z_1|^{\alpha}-\ell_{\alpha})(|Z_3|^{\alpha}-\ell_{\alpha})]+\ell_{\alpha}^2(\sigma_1^{\alpha}-1)(\sigma_3^{\alpha}-1).
\end{align*}
Here $Z_1,Z_3$ are standard normal random variables (after conditioning on $\mathcal{F}_{1,3}$) with correlation coefficient $\rho_{13}$. Note that almost surely $0<|\rho_{13}|<1$ and hence $(Z_1,Z_3)$ have joint density.

Define a function of correlation coefficient as below,
\begin{align*}
I(\rho):&=\frac{1}{2\pi\sqrt{1-\rho^2}}\int_{\mathbb{R}}\int_{\mathbb{R}}(|x|^{\alpha}-\ell_{\alpha})(|y|^{\alpha}-\ell_{\alpha})\exp\left(-\frac{1}{2(1-\rho^2)}\left(x^2+y^2-2xy\rho\right)\right)dxdy.
\end{align*}

Note that $I(0)=0,\  I(\rho)=I(-\rho)$ and $I(\rho)$ is a smooth function. Above given expansion of $Y_{1,3}$ can be written as 
\begin{align*}
Y_{1,3}&=\sigma_1^{\alpha}\sigma_3^{\alpha}\rho_{13}^2\frac{I(\rho_{13})}{\rho_{13}^2}+\ell_{\alpha}^2(\sigma_1^{\alpha}-1)(\sigma_3^{\alpha}-1).
\end{align*} 
We now show $I(\rho)/\rho^2$ is a bounded function.
Fix $t>0$. For $|\rho|>t$, note that $I(\rho)$ is Gaussian expectation and therefore $I(\rho)/\rho^2$ is bounded. We use L'Hospital's rule to get a bound on $\frac{I(\rho)}{\rho^2}$ when $|\rho|<t$.  Using differentiation under integral sign, and using L'Hospital's rule twice, it can be seen that $I(\rho)/\rho^2$ is a bounded function. Hence we can write,
\begin{align*}
|Y|\leq M\sigma_1^{\alpha}\sigma_{3}^{\alpha}|\rho_{13}^2|+m_{\alpha}^2|\sigma_1^{\alpha}-1||\sigma_3^{\alpha}-1|.
\end{align*}
As $\forall\alpha<2$,
\begin{align}\label{alpha<2 inequality}
|\sigma_1^{\alpha}-1|\leq \left|\frac{\langle R_1,R_1\rangle}{n}-1\right|\leq \frac{1}{\sqrt{n}}\left|\frac{\langle R_1,R_1\rangle-n}{\sqrt{n}}\right|.
\end{align}
As a result we can write,
\begin{align*}
|nY|\leq M \sigma_1^{\alpha}\sigma_{3}^{\alpha}n{\rho_{13}^2}+\left|\frac{\langle R_1,R_1\rangle-n}{\sqrt{n}}\right|\left|\frac{\langle R_3,R_3\rangle-n}{\sqrt{n}}\right|.
\end{align*}
It is easy to see that, the $k$th moments of $\sigma_1^{\alpha},\sigma_{3}^{\alpha},n{\rho_{13}^2},\left|\frac{\langle R_1,R_1\rangle-n}{\sqrt{n}}\right|$ are uniformly bounded by some constant, $\forall n\in \mathbb{N}$ and hence $k$th moments of $nY$ are also uniformly bounded . This completes the proof of Lemma \ref{main lemma_1}. 
\end{proof}

This proves that the fourth moments are uniformly bounded. This completes the proof of Lemma \ref{Lemma 1 of alpha<1}. 
\end{proof}

\section{$\alpha>s$ range} \label{sec a>1}
In this section we prove Theorem \ref{main theorem} for the range $\alp>s$. We prove the below lemma which implies Theorem \ref{main theorem} for this range of $\alpha$.
\begin{lemma}\label{alpha>1} 
Fix $s<\alpha$ and $0<\varepsilon<1/2$. Then  $\mathbb{P}(\lambda_1(B_{n,\alp,s})>\varepsilon)\rightarrow 1$, as $n\rightarrow \infty$.
\end{lemma}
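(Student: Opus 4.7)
The plan is to reduce the lemma to controlling the operator norm of a mean-zero ``noise'' matrix and to bound that norm with a high-order trace moment.

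\textbf{Step 1 (decomposition).} With $W_{ij}:=\langle R_i,R_j\rangle/\sqrt n$ and $c_n:=\mathbb{E}[|W_{12}|^{\alpha}]=\ell_{\alpha}+o(1)$, write
\[
B_{n,\alp,s}=\Lambda_m+\frac{c_n}{n^{\alpha/2}}(J_m-I_m)+C_m,
\]
where $\Lambda_m=\operatorname{diag}(\sigma_1^{2\alpha},\ldots,\sigma_m^{2\alpha})$ and $C_m$ has zero diagonal with $(C_m)_{ij}=n^{-\alpha/2}(|W_{ij}|^{\alpha}-c_n)$ off the diagonal. Sub-exponential Bernstein applied to each $\sigma_i^2$, combined with a union bound over $m=\lfloor n^s\rfloor$ rows (as in Section~\ref{sec a>2}), gives $\|\Lambda_m-I_m\|_{\operatorname{op}}=o_{\mathbb{P}}(1)$. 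Since $\frac{c_n}{n^{\alpha/2}}J_m\succeq 0$ and $\frac{c_n}{n^{\alpha/2}}I_m=o(1)I_m$, Weyl's inequality yields
\[
\lambda_1(B_{n,\alp,s})\;\ge\;1-o_{\mathbb{P}}(1)-\|C_m\|_{\operatorname{op}},
\]
so the lemma reduces to proving $\|C_m\|_{\operatorname{op}}\xrightarrow{\mathbb{P}}0$.

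\textbf{Step 2 (trace moment method).} Since $\alpha>s$, pick an integer $k>s/(\alpha-s)$, so that $s(k+1)-\alpha k<0$. By Markov, $\|C_m\|_{\operatorname{op}}^{2k}\le \operatorname{tr}(C_m^{2k})$, and it is enough to show $\mathbb{E}[\operatorname{tr}(C_m^{2k})]\to 0$. Expand
\[
\operatorname{tr}(C_m^{2k})=\sum_{i_1,\ldots,i_{2k}\in[m]}\prod_{\ell=1}^{2k}(C_m)_{i_\ell i_{\ell+1}},
\]
and classify the sum by the combinatorial shape of the closed walk $\mathbf i=(i_1,\ldots,i_{2k},i_1)$. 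The dominant shapes are the ``tree-double'' walks, where the distinct edges used form a tree and each is traversed exactly twice; they use $v=k+1$ vertices and contribute at most $C_k\,m^{k+1}n^{-\alpha k}=C_k\,n^{s(k+1)-\alpha k}\to 0$ by the choice of $k$.

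\textbf{Step 3 (non-tree-double shapes, the main obstacle).} The remaining walks are where the correlated entries of $C_m$ force extra care: entries sharing a row share the same Gaussian $R_i$, so a single-use edge need not give a vanishing expectation under a naive H\"older bound. The proposed remedy is to split
\[
|W_{ij}|^{\alpha}-c_n=\ell_{\alpha}(\eta_i+\eta_j)+\zeta_{ij},\qquad \eta_i:=\sigma_i^{\alpha}-\mathbb{E}\sigma^{\alpha},
\]
so that $\mathbb{E}[\zeta_{ij}\mid R_i]=\mathbb{E}[\zeta_{ij}\mid R_j]=0$. Because $\eta_i=O_{L^p}(n^{-1/2})$, the rank-two piece $\frac{\ell_{\alpha}}{n^{\alpha/2}}(\eta\mathbf 1^T+\mathbf 1\eta^T)$ has operator norm $O_{\mathbb{P}}(n^{s-(1+\alpha)/2})$, which is $o_{\mathbb{P}}(1)$ for every $\alpha>s$ with $s\le 1$. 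For the doubly-centered part, any closed walk with a pendant vertex of single-use incidence has vanishing expectation after conditioning on the other rows; and any ``cycle-type'' walk inherits an $n^{-1}$ factor per independent cycle from the very same $Y_{1,3}$-bound proved in Lemma~\ref{main lemma_1} (where $\mathbb{E}[(nY_{1,3})^p]$ was shown to be bounded for every $p$). A case-by-case accounting, parallel to the walk enumeration in Section~\ref{sec a<1}, verifies that every non-tree-double shape contributes $o(n^{s(k+1)-\alpha k})$; since the number of shapes is bounded for fixed $k$, summing completes the estimate.

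The principal difficulty is the combinatorial bookkeeping in Step~3: classifying all walk shapes and confirming that the conditional-expectation cancellations coming from the Gaussian structure beat the naive H\"older bound (which on its own only recovers the weaker range $\alpha>2s$ of Proposition~\ref{alpha>2}). With that in hand, Steps 1--3 combine to give $\lambda_1(B_{n,\alp,s})\to 1$ in probability, which is strictly stronger than $\lambda_1>\varepsilon$ for any prescribed $\varepsilon<1/2$, and so proves Lemma~\ref{alpha>1}.
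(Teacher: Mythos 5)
Your Steps 1 and 2 match the paper's strategy exactly: subtract the deterministic diagonal and the rank-one mean matrix, apply Weyl's inequality to reduce to controlling the fluctuation matrix $C_m$, and then bound $\lambda_1(C_m)$ by $\mathbb{E}[\operatorname{Tr}(C_m^{2k})]$ for $k$ large enough that $s(k+1)-\alpha k<0$. The one genuinely different idea is the double-centering split $|W_{ij}|^{\alpha}-c_n=\ell_{\alpha}(\eta_i+\eta_j)+\zeta_{ij}$, which is a clean reorganization: it peels off a rank-two piece whose operator norm is $O_{\mathbb P}(n^{s-(1+\alpha)/2})=o_{\mathbb{P}}(1)$, and leaves a matrix whose entries have zero conditional mean given either row. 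This could in principle replace the paper's Lemma~\ref{main lemma_2} (the $G$-bound used for leaf vertices), since the bias terms of order $n^{-1/2}$ that Lemma~\ref{main lemma_2} has to beat are precisely the $\eta$-terms you remove. That is a worthwhile simplification to pursue.

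However, the heart of the proof -- the Step 3 walk classification -- is not actually carried out, and two of your stated estimates there do not hold as written. First, ``any closed walk with a pendant vertex of single-use incidence has vanishing expectation after conditioning on the other rows'' is not correct in this correlated model. In a closed walk every vertex has even degree in the edge-multiset, so a degree-$2$ vertex $v$ contributes either $\zeta_{uv}^{2}$ (a conditional variance, strictly positive) or $\zeta_{uv}\zeta_{vw}$ with $u\neq w$, and in the latter case $\mathbb{E}[\zeta_{uv}\zeta_{vw}\,|\,R_u,R_w]$ is not zero: it is exactly the quantity $Y_{u,w}$, and the best one can say is that it is $O(1/n)$, which is the content of Lemma~\ref{main lemma_1}. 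There is no genuine vanishing; the entire argument rests on this $O(1/n)$ rate. Second, the claim that every non-tree-double shape contributes $o(n^{s(k+1)-\alpha k})$ is optimistic: walks visiting $k+l$ vertices with $l\ge 2$ produce at least $l$ factors of type $Y_{i,j}$, giving a contribution of order $n^{s(k+1)-\alpha k}\cdot n^{(l-1)(s-1)}$, which since $s\le 1$ is $O(n^{s(k+1)-\alpha k})$ but not necessarily $o(\cdot)$; it still tends to $0$, but the bookkeeping must track the compensation ``extra vertex costs $n^{s}$, extra $Y$-factor gains $n^{-1}$'' precisely, and must also handle the case of several consecutive degree-$2$ vertices where one only gets $\lceil t/2\rceil$ rather than $t$ conditional-expectation factors. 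The paper's proof of Lemma~\ref{alpha>1} does exactly this accounting and it is where the main work is; your proposal defers it to ``a case-by-case accounting'' and therefore has a genuine gap at precisely the step that distinguishes $\alpha>s$ from the easier $\alpha>2s$ regime of Proposition~\ref{alpha>2}.
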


\begin{proof}[Proof of Lemma ~\ref{alpha>1}]

 For ease of notation, we write $B_{n,\alp,s}$ as $B_m$.
 Define a diagonal matrix $D_m$ such that $D_m(i,i)=B_m(i,i)-\frac{\ell_{\alpha}}{n^{\alpha/2}}$. Let $C_m:=B_m-\left(\frac{\ell_{\alpha}}{n^{\alpha/2}}\right)J_m-D_m$. Note that $C_m(i,j)=\frac{1}{n^{\alpha/2}}\left(\left|\frac{\langle R_i,R_j\rangle}{\sqrt{n}}\right|^{\alpha}-\ell_{\alpha}\right)$ and the diagonal entries of $C_m$ are zero.

We first show that $\mathbb{P}(\lambda_1(C_m)\leq -1+2\varepsilon)\rightarrow 0$. This will complete the proof of Lemma \ref{alpha>1}. This is true as,
using Lemma \ref{Bernstein}, we have
\begin{align}\label{eq:diag}
\mathbb{P}\left(\bigcup_{i=1}^{ m}\left((D_m)_{ii}\leq 1-\varepsilon\right)\right)\leq 2m\exp(-c_3n),
\end{align}
for some constant $c_3>0$ depending on $\alpha$. To get the matrix $B_m$, we add $C_m$ with $D_m+(\ell_{\alpha}/n^{\alpha/2})J_m.$
Using Weyl's inequality (Theorem $4.3.1$ of \cite{HJ}), we get
\begin{align}\label{eq:weyl_2}
\mathbb{P}\left(\lambda_1(B_m)- \lambda_1(C_m)< 1-\varepsilon\right)\leq 2m\exp(-c_3n).
\end{align}

The above inequality shows that the eigenvalues of $B_m$ are at least $1-\varepsilon$ more than that of $C_m$, with high probability. This completes the proof if we prove $\mathbb{P}(\lambda_1(C_m)\leq -1+2\varepsilon)\rightarrow 0$.

Choose $k$ such that $\alpha>\left(\frac{k+1}{k}\right)s$.
\begin{align*}
\mathbb{P}(\lambda_1(C_m)\leq -1+2\varepsilon)&\leq \mathbb{P}((\lambda_1(C_m))^{2k}\geq (1-2\varepsilon)^{2k})\\
&\leq \frac{\mathbb{E}[\mathrm{Tr}(C_m^{2k})]}{(1-2\varepsilon)^{2k}}.
\end{align*}

We prove that $\mathbb{E}[\mathrm{Tr}(C_m^{2k})]\rightarrow 0$, where $\alpha>\left(\frac{k+1}{k}\right)s$. This completes the proof of the theorem.

 


We state a lemma here which generalizes Lemma \ref{main lemma_1}. Let $p\in \mathbb{N}_{\geq 3}.$ 
Define $$G:={n^{(2(p-3)+2)\alpha/2}}\mathbb{E}[C_{12}C_{13}C_{14}^2C_{15}^2\dots C_{1p}^2\bigr|\mathcal{F}_{2,3,\dots,p}]$$
\begin{lemma}\label{main lemma_2}
$\mathbb{E}[(nG)^k]$ is uniformly bounded by constant $M_k$ for all $k\in\mathbb{N}$.
\end{lemma}

\begin{proof}

Let $W_{12}:=\bigg(\bigg|\frac{\langle R_1,R_2\rangle}{\sigma_2\sqrt{n}}\bigg|^{\alpha}-\ell_{\alpha}\bigg)+ \frac{\ell_{\alpha}}{\sigma_2^{\alpha}}\bigg(\sigma_2^{\alpha}-1\bigg)$. Then
\begin{align*}
G=\sigma_2^{\alpha}\sigma_3^{\alpha}\sigma_4^{2\alpha}\sigma_5^{2\alpha}\dots\sigma_p^{2\alpha}\mathbb{E}\bigg[W_{12}W_{13}W_{14}^2W_{15}^2\dots W_{1p}^2\biggr|\mathcal{F}_{2,3,\dots,p}\bigg].
\end{align*}
Due to \eqref{alpha<2 inequality}, the term $(\sigma_2^\alpha-1)$ is of the order of $1/\sqrt{n}$. All moments of $\sigma_2^{\alpha}$ are uniformly bounded. So for $\mathbb{E}[(nG)^k]$ to be uniformly bounded, it is enough to prove that $k$-th moments of 
\begin{align*}
n\mathbb{E}\left[\bigg(\bigg|\frac{\langle R_1,R_2\rangle}{\sigma_2\sqrt{n}}\bigg|^{\alpha}-\ell_{\alpha}\bigg)\bigg(\bigg|\frac{\langle R_1,R_3\rangle}{\sigma_2\sqrt{n}}\bigg|^{\alpha}-\ell_{\alpha}\bigg)W_{14}^2\dots W_{1p}^2\biggr|\mathcal{F}_{2,3,\dots,p}\right]
\end{align*} and
\begin{align*}
\sqrt{n}\mathbb{E}\left[\bigg(\bigg|\frac{\langle R_1,R_2\rangle}{\sigma_2\sqrt{n}}\bigg|^{\alpha}-\ell_{\alpha}\bigg)W_{14}^2\dots W_{1p}^2\biggr|\mathcal{F}_{2,3,\dots,p}\right]
\end{align*}
are uniformly bounded, $\forall n\in\mathbb{N}$. We will prove that $k$-th moment of first quantity is uniformly bounded. For the second quantity, similar argument works.

Note that conditional on $\mathcal{F}_{2,3,\dots,p}$, the conditional expectation $G$ is a function of standard Gaussian random variables, say, $Z_2,Z_3,\dots,Z_p$, with the correlation matrix being $\tilde{\Sigma}=A_{p-1}A_{p-1}^T$, where $A_{p-1}$ is $(p-1)\times n$ matrix with $A_{p-1}(i,j)=\frac{X_n(i+1,j)}{\sqrt{n}\sigma_{i+1}}$. It can be seen easily that almost surely rank$(A_{p-1})=p-1$ and hence $\tilde{\Sigma}$ is invertible. For any symmetric invertible matrix $\Sigma$ with $1$s on diagonal, define
\begin{align*}
h(\Sigma):=\frac{1}{\sqrt{(2\pi)^{p-1}|\Sigma|}}\int(|x_1|^{\alpha}-\ell_{\alpha})(|x_2|^{\alpha}-\ell_{\alpha})\dots (|x_{p-1}|^{\alpha}-\ell_{\alpha})^2\exp\left(\frac{-x^T\Sigma^{-1}x}{2}\right)dx_1\dots dx_{p-1}
\end{align*}
Here $h$ is a function of the entries above the diagonal of $\Sigma$. Using symmetry and independence $h(I_{p-1})=0$. Expanding $W_{12}W_{13}W_{14}^2W_{15}^2\dots W_{1p}^2$ and using the fact that $(\sigma_2^\alpha-1)$ is of order $1/\sqrt{n}$, to prove that $k$-th moments of 
\begin{align*}
    n\mathbb{E}\left[\bigg(\bigg|\frac{\langle R_1,R_2\rangle}{\sigma_2\sqrt{n}}\bigg|^{\alpha}-\ell_{\alpha}\bigg)\bigg(\bigg|\frac{\langle R_1,R_3\rangle}{\sigma_2\sqrt{n}}\bigg|^{\alpha}-\ell_{\alpha}\bigg)W_{14}^2\dots W_{1p}^2\biggr|\mathcal{F}_{2,3,\dots,p}\right]
\end{align*}
are uniformly bounded, it is enough to prove that $k$-th moments of $nh(\tilde{\Sigma})$ are uniformly bounded.
%

It is easy to see that $h$ is a differentiable function. We make use of the multi-variable mean value theorem $|f(y)-f(x)|\leq |\nabla f(cx+(1-c)y)||y-x|,$ for some $ 0\leq c\leq 1$. Using the fact that $\sum_{i< j}\tilde{\Sigma}_{i,j}^2$ is of order of $1/\sqrt{n}$, it is enough to show $h(\Sigma)/\sum_{i< j}\Sigma_{i,j}^2$ is bounded.

 For $\Sigma$ bounded away from the origin, using Gaussian integrals, it can be seen that $\frac{h(\Sigma)}{\sum_{i< j}\Sigma_{i,j}^2}$ is bounded. As $h(I_{p-1})=0$ at the origin, mean value theorem and basic computations gives boundedness of $h(\Sigma)/\sum_{i< j}\Sigma_{i,j}^2$ in a neighbourhood of the origin. This completes the proof of Lemma \ref{main lemma_2}. 
\end{proof} 

\textbf{Computation of $\mathbb{E}[\mathrm{Tr}(C_m^{2k})]$}: Consider a closed walk of length $2k$ on complete graph $K_m$. Let $i_1i_2\dots i_{2k-1}i_1$ be the closed walk. This corresponds to the term $\mathbb{E}[C_{i_1i_2}C_{i_2i_3}\dots C_{i_{2k-1}i_1}]$ in expansion of $\mathbb{E}[\mathrm{Tr}(C_m^{2k})]$.  Thus terms in expansion of  $\mathbb{E}[\mathrm{Tr}(C_m^{2k})]$ correspond to closed walks of length $2k$ (starting point can be any of the $m$ vertices).  As the diagonal entries are zero, the paths cannot have loops at any vertices. We first look at walks without "leaf vertices". By "leaf vertices" we mean the vertices, like "$3$" and "$1$", which are of degree $2$ as shown below (In the graph generated due to closed walk, such vertices are leafs). 
\begin{figure}[h!]
\includegraphics[scale=0.7]{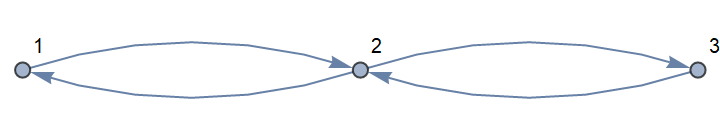}
\centering
\caption{The vertices $1,3$ are leaf vertices}
\end{figure}

So we look at closed walks of length $2k$ without loops and leaf vertices. As the off-diagonal entries of $C_m$ are of the order $1/n^{\alpha/2}$ and $\alpha>\left(\frac{k+1}{k}\right)s$, the sums of expectations corresponding to paths visiting $k+l$ vertices with $l\leq1$(each vertex can be chosen in at most $\lfloor n^s\rfloor$ ways), goes to $0$. So it is enough to look at paths visiting at least $k+2$ vertices.   

Closed walks of length $2k$, visiting $k+l,\ l\geq 2$ vertices, must have at least $2l$ vertices of degree $2$ (none of which are leaf vertices) as shown below. This is due to the fact that since it is a closed walk, degree of every vertex is even and sum of degrees of vertices must equal twice the total number of edges. 

There would be $C_{i,j}C_{j,k}$ term when expanding $\mathrm{Tr}(C_m^{2k})$ as sum of product of entries of $C_m$. This factor shows up due to the vertex $j$ having degree $2$. We would like to condition on the rows $i,k$ of $X_n$ and use Lemma \ref{main lemma_1}.

It could happen that more than $1$, say $t$, degree-$2$ vertices come together in series as shown below. In such a case we condition as shown in the example below.
 
\begin{figure}[h!]
\includegraphics[scale=0.7]{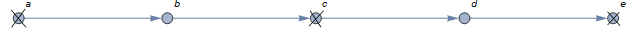}
\centering
\end{figure}

Suppose there is a path traversing vertices $a$ through $e$, as shown above, where degrees of both $a,e$ are at least $4$ and $b,c,d$ are all degree-$2$ vertices. Here degrees are calculated in the graph generated by the closed walk of length $2k$. In such a case we will have the factor $C_{a,b}C_{b,c}C_{c,d}C_{d,e}$ in the expansion of $\mathrm{Tr}(C_m^{2k})$ corresponding to that path. In the expectation term corresponding to such a path, we condition on rows $a,c,e$ and use independence to get $2$ conditional expectations $Y_{a,c},Y_{c,e}$ mentioned in Section \ref{sec a<1} . The `x' mark denotes the rows which we are going to condition on. If there are even number of degree-$2$ vertices coming together, we condition as shown below. 
\begin{figure}[h!]
\includegraphics[scale=0.7]{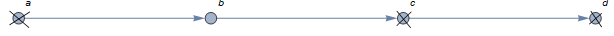}
\centering
\end{figure}

In the case shown above, vertices $a,d$ have degree at least $4$ and $b,c$ are degree-$2$ vertices. We condition of rows $a,c,d$. All other rows corresponding to vertices with degrees greater than $2$ will also be conditioned.

Now we look at $\mathbb{E}[\mathrm{Tr}(C_m^{2k})]$ and the walks of length $2k$, without loops and leaf vertices , visiting $k+l$ vertices. The $k+l$ vertices can be chosen in $\lfloor n^{s}\rfloor^{k+l}$ ways and taking the order of $C_{i,j}$ into account we can write,
\begin{align*}
\frac{\lfloor n^{s}\rfloor^{k+l}}{n^{k\alpha}}\mathbb{E}\left[\left(\left|\frac{\langle R_1,R_2\rangle}{\sqrt{n}}\right|^{\alpha}-\ell_{\alpha}\right)\dots\right]
\end{align*}   
corresponding to the walks we are interested in. Using Independence and conditioning on the rows corresponding to the vertices with degree at least $4$ and those appropriate vertices when more than $1$ degree-$2$ vertices come together, we get product of at least $l$ number of conditional expectations like $Y_{i,j}$. Using Lemma \ref{main lemma_1}, $n^l\mathbb{E}\left[\left(\left|\frac{\langle R_1,R_2\rangle}{\sqrt{n}}\right|^{\alpha}-\ell_{\alpha}\right)\dots\right]$ is uniformly bounded. As $l$ was arbitrary and as $\alpha>\left(\frac{k+1}{k}\right)s$, we can see that the expectation corresponding to the walks without loops and leaf vertices goes to $0$ with $n$.

Now we look at paths without loops but have leaf vertices. If initially we had a closed walk of length $2g$ without leaf vertices and visited $l$ different vertices.   Note that each leaf vertex attached increases length of walk by $2$ and number of vertices visited by $1$. Adding $t$ leaf vertices such that $g+t=k$ gives corresponding expectation terms like
\begin{align*}
\frac{\lfloor n^{s}\rfloor^{g+l+t}}{n^{(g+t)\alpha}}\mathbb{E}\left[\left(\left|\frac{\langle R_1,R_2\rangle}{\sqrt{n}}\right|^{\alpha}-\ell_{\alpha}\right)\dots\right].
\end{align*} 
 If such a leaf vertex or multiple leaf vertices can be attached to a vertex which is degree-$2$ originally, then we condition on the rows corresponding to all the leaf vertices and the vertices whose rows we were conditioning on originally, as shown below. 
\begin{figure}[h]
\includegraphics[scale=0.4]{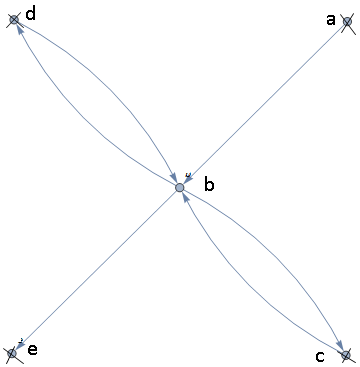}
\centering
\end{figure}
The vertices $d,c$ are leaf vertices attached to vertex $b$.
 Without the vertices $d,c$ and edges between them and $b$, the vertex $b$ would be of degree-$2$. After addition of vertices $d,c$ and the edges, the conditioning will be done on rows corresponding to $a,c,d,e$. This is where Lemma \ref{main lemma_2} is used. Such conditioning gives conditional expectation factor like $G$ in Lemma \ref{main lemma_2} for every vertex which get attached at least one leaf vertex to it. 
 
 If leaf vertices are attached to a vertex which is of degree $4$ or more originally, then again we condition on rows corresponding to all leaf vertices along with the previous vertices we were conditioning on (Lemma \ref{main lemma_2} is not needed here). As $G$ is of the order of $1/n$ and $\alpha>\left(\frac{k+1}{k}\right)s$,
 \begin{align*}
\frac{\lfloor n^s\rfloor^{g+t+l}}{n^{(g+t)\alpha}}\mathbb{E}\left[\left(\left|\frac{\langle R_1,R_2\rangle}{\sqrt{n}}\right|^{\alpha}-\ell_{\alpha}\right)\dots\right]\rightarrow 0
\end{align*}  
This shows that $\mathbb{E}[\mathrm{Tr}(C_n^{2k})] \rightarrow 0$, as $n\rightarrow \infty$. Taking $k$ arbitrarily large completes the proof of Lemma \ref{alpha>1}.
\end{proof}

\subsection*{Acknowledgement.} The author thanks Manjunath Krishnapur for suggesting the question addressed in this article and for several helpful discussions without which this article could not have been possible.
\bibliographystyle{abbrvnat}
\bibliography{biblio_lin_alg}

\end{document}